\documentclass[11pt]{amsart}
\usepackage{amsmath,amssymb,amsthm,mathtools,mathrsfs}
\usepackage[a4paper,margin=2.7cm]{geometry}
\usepackage[colorlinks=true,linkcolor=blue,citecolor=blue,urlcolor=blue]{hyperref}

\newtheorem{theorem}{Theorem}[section]

\newtheorem{lemma}[theorem]{Lemma}
\newtheorem{corollary}[theorem]{Corollary}
\theoremstyle{definition}

\theoremstyle{remark}
\newtheorem{remark}[theorem]{Remark}
\newtheorem*{ack}{\bf Acknowledgements}
\theoremstyle{plain}
\newtheorem{thmletter}{Theorem}

\numberwithin{equation}{section}
\allowdisplaybreaks[1]

\title[The Yau Geometric Conjecture]{The Yau Geometric Conjecture for Isolated Hypersurface Singularities}

\author{Siyuan Fu}
\address{Weiyang College, Tsinghua University}
\email{fusy24@mails.tsinghua.edu.cn}

\author{Stephen S.-T. Yau}
\address{Department of Mathematical Sciences, Tsinghua University, Beijing, China}
\email{yau@uic.edu}

\author{Huaiqing Zuo}
\address{Department of Mathematical Sciences,
	Tsinghua University,
	Beijing, 100084, P. R. China.}
\email{hqzuo@mail.tsinghua.edu.cn}

\date{}

\begin{document}

\begin{abstract}

This paper presents a proof of the long-standing Yau Geometric Conjecture.

Keywords.  Geometric genus,
Milnor number, multiplicity, weighted homogeneous singularity.

		MSC(2020). Primary 32S25; Secondary 14B05, 11P21.
\end{abstract}

\maketitle

\section{Introduction}

The Milnor number, geometric genus, and multiplicity are fundamental
invariants of an isolated hypersurface singularity. Inequalities among
these invariants give quantitative information about the singularity;
their equality cases can also characterize its analytic structure. For
weighted homogeneous singularities, the Milnor--Orlik product formula and
the geometric genus lattice-point formula connect this problem with
counting positive integral points in a simplex
\cite{MO70,MT80,YZ15,LRYZ18}.

Let $n\ge3$, and let
$f\colon(\mathbb C^n,0)\to(\mathbb C,0)$ be a weighted homogeneous
polynomial with an isolated singularity at the origin. Write
$V=\{f=0\}$, and denote its Milnor number, geometric genus, and
multiplicity by $\mu$, $p_g$, and $v$, respectively. Thus $n$ is the
dimension of the ambient space, and $\dim V=n-1$. For a real number $t$,
put
\begin{equation}\label{eq:pn-definition}
 (t)_n=\prod_{j=0}^{n-1}(t-j),\qquad
 p_n(t)=(t-1)^n-(t)_n.
\end{equation}
For an integer $v\ge2$, this gives
\[
 p_n(v)=(v-1)^n-n!\dbinom vn,
\]
where $\dbinom vn=0$ when $v<n$.

The \textbf{Yau Geometric Conjecture}, announced in 1995
\cite{YZ06}, predicts the sharp inequality
\begin{equation}\label{eq:ygc}
 \mu-p_n(v)\ge n!p_g.
\end{equation}
Equality holds if and only if $f$ is right equivalent to a
homogeneous polynomial. Here right equivalence means that a local
biholomorphic change of coordinates transforms one defining function
into the other. The homogeneous case explains the correction term in
\eqref{eq:ygc}: for a homogeneous isolated singularity of degree $v$,
\[
 \mu=(v-1)^n,\qquad p_g=\dbinom vn.
\]

Several low-dimensional cases were established before the general
case. In ambient dimension three, Xu and Yau proved the required
inequality and characterized its equality case
\cite[Theorem~A]{XY93}. The cases of ambient dimensions four and five
with positive geometric genus were treated by Lin and Yau
\cite[Main Theorem]{LY04} and by Chen, Lin, Yau, and Zuo
\cite[Theorem~1.4]{CLYZ11}, respectively. Lin, Raghuvanshi, Yau, and
Zuo obtained the corresponding six-dimensional lattice-point estimate
and its geometric application \cite[Theorem~1.10 and Section~4]{LRYZ18}.
Yau and Zuo proved, in every dimension, the multiplicity bound
$\mu\ge(v-1)^n$, with equality exactly for singularities right
equivalent to homogeneous ones \cite[Theorem~A]{YZ15}. Their result
also establishes the Yau Geometric Conjecture when $p_g=0$
\cite[Theorem~C]{YZ15}. Taken together, these results settle the
conjecture for $3\le n\le6$.

The purpose of this paper is to prove the remaining cases and hence
establish the following theorem.

\begin{thmletter}\label{thm:main-introduction}
Let $n\ge3$, and let
$f\colon(\mathbb C^n,0)\to(\mathbb C,0)$ be a weighted homogeneous
polynomial defining an isolated hypersurface singularity at the origin.
Then
\[
 n!p_g\le\mu-(v-1)^n+n!\dbinom vn.
\]
Equality holds if and only if $f$ is right equivalent to a
homogeneous polynomial.
\end{thmletter}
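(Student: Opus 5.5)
We reduce everything to a lattice-point inequality through the classical formulas for weighted homogeneous singularities. Write $f$ with weights $w_1,\dots,w_n$, normalized so that $f$ has weighted degree $1$, and put $a_i=1/w_i\ge 2$ (rational, and by the Saito normal form we may assume $a_i\ge2$ after removing quadratic variables). The Milnor--Orlik formula gives $\mu=\prod_{i=1}^n(a_i-1)$. The geometric genus equals the number of positive integral points in the open simplex,
\[
 p_g=\#\Bigl\{x\in\mathbb Z_{>0}^n:\ \sum_i x_i/a_i\le 1\Bigr\},
\]
as in \cite{MT80,YZ15}. By \cite{YZ15} the multiplicity satisfies $v=\min(\lfloor a_{\min}\rfloor,\dots)$; concretely, after ordering $a_1\ge\cdots\ge a_n$, we have $v\le a_n$. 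Since $p_g=0$ is already covered by \cite[Theorem~C]{YZ15}, we assume $p_g>0$, so that $\sum_i 1/a_i<1$ and $v\ge n$ is possible. The theorem then becomes the sharp estimate
\[
 n!\,P(a_1,\dots,a_n)\le \prod_i(a_i-1)-(v-1)^n+(v)_n,
\]
where $P$ is the lattice-point count. Equality is to be shown to force $a_1=\cdots=a_n=v$, which by Saito's theorem means $f$ is right equivalent to a homogeneous polynomial.

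The key steps are the following.

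\textbf{Step 1 (monotonicity).} Let $F(a)=\prod_i(a_i-1)-n!P(a)$. We show that $F$ is nondecreasing when any single $a_i$ increases past $v$. Increasing $a_i$ to $a_i+\delta$ increases $\prod(a_j-1)$ by $\delta\prod_{j\ne i}(a_j-1)$. The new lattice points lie in a slab, and their number is bounded by $\delta$ times a count of points of an $(n-1)$-dimensional simplex of size comparable to $\prod_{j\ne i}a_j/(n-1)!$. The sharp form of this slab estimate needs a uniform bound of the type $n!\cdot\#(\text{slab points})\le \delta\prod_{j\ne i}(a_j-1)$. This reduces the problem to the case where all $a_i$ are as small as allowed, namely close to $v$.

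\textbf{Step 2 (base case).} At $a_i=v$ for all $i$ we have $P=\binom{v-1}{n}$. One checks directly that $n!\binom{v-1}{n}\le (v-1)^n-(v-1)^n+(v)_n$, with equality exactly in the homogeneous case. Here the counting convention, open versus closed boundary, must match $p_g=\binom vn$. We therefore use the closed condition $\sum x_i/a_i\le1$ with $x_i\ge1$, which gives $\binom{v}{n}$ when all $a_i=v$. Equality is then immediate.

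\textbf{Step 3 (induction on $n$).} To prove the slab bound we slice the simplex by $x_i=k$ and apply the $(n-1)$-dimensional inequality to each slice. This is an induction on $n$ with base case $n=3$ from \cite{XY93}. It requires the stronger, fully general sharp estimate $n!P(a)\le\prod(a_i-1)-p_n(\lfloor a_n\rfloor)$ for real $a_i$, as a statement about all real $a_i\ge n$. Summing the inductive bounds over slices and comparing with the integral $\int\prod_{j\ne i}(a_j(1-t/a_i)-1)\,dt$ should give the claim, using convexity of $t\mapsto (t)_{n-1}$ and $(t-1)^{n-1}$.

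The main obstacle is Step 3. Summing the slice bounds produces error terms $p_{n-1}(\lfloor a_n(1-k/a_i)\rfloor)$, and these must be shown to dominate the discretization error between the sum and the integral. I would first try an Euler--Maclaurin-style comparison, exploiting that $p_{n-1}$ has positive leading coefficient $\binom{n}{2}$-type growth and is increasing for $t\ge n-1$. Failing that, I would look for a cleverer choice of which variable to slice along, probably the smallest $a_n$, together with rounding each $a_i$ down to integers first, which decreases $P$ by the appropriate amount.
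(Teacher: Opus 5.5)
\textbf{There is a genuine gap.} Your reduction rests on the assertion $v\le a_n$, and it is false. For every monomial $z^\alpha$ of $f$, weighted homogeneity gives $1=\sum_i\alpha_i/a_i\le(\sum_i\alpha_i)/a_n$, so $v\ge\lceil a_n\rceil$. Lemma~\ref{lem:saito} with $J=\{n\}$ then gives $v=\lceil a_n\rceil$ (Corollary~\ref{cor:axis}). So whenever $a_n\notin\mathbb Z$, at least one intercept lies strictly between $u=v-1$ and $v$. For example, $x^2y+y^5+z^4$ has intercepts $(5,4,5/2)$ and $v=3$. This is exactly the difficult case, and your plan does not address it.

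The estimate you aim for in Step~3, $n!P(a)\le\prod_i(a_i-1)-p_n(\lfloor a_n\rfloor)$, is what the paper proves inside Theorem~\ref{thm:integer-reference}. It slices along the smallest intercept, uses Xu--Yau as the base case, and uses Yau--Zhang for the slice with parameter one. That settles the case $a_n\in\mathbb Z$. For $a_n\notin\mathbb Z$, however, it only gives $p_n(v-1)$, which is strictly smaller than $p_n(v)$.

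No purely lattice-point argument can bridge this. Take $a=(3+\varepsilon,3+\varepsilon,3+\varepsilon)$ with small $\varepsilon>0$. Then $6P_3(a)=6$, but $\prod_i(a_i-1)-p_3(\lceil a_3\rceil)=(2+\varepsilon)^3-3<6$.

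The missing idea is to use the isolated-singularity condition a second time. Apply Lemma~\ref{lem:saito} to each group of variables sharing an intercept $b\in(u,v)$. This produces distinct partner variables with intercept $c=b/(b-u)>v$. It follows that $(b-1)(c-1)=u^2+(u-1)(c-v)$, which gives a lower bound for $\mu$ with explicit positive coefficients in $\delta_j=a_j-v$. The paper then:
\begin{itemize}
\item bounds $P_n(a)$ above by replacing the small intercepts with $v$;
\item evaluates that count by fixing the sum of the coordinates attached to the small intercepts, applying Theorem~\ref{thm:integer-reference} to each slice;
\item reduces the coefficient comparison to the single estimate $A_{n-m}<(v-2)^m$, proved by a midpoint convexity bound for $n\ge7$.
\end{itemize}
The cases $v=2$ and $m\ge v$ give $p_g=0$ and are handled by Yau--Zuo. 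The cases $3\le n\le6$ are quoted from the literature.

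Your Step~1 also fails as stated. $P$ is a step function of each $a_i$: it jumps by a positive integer whenever the moving facet passes a lattice point, while $\prod_j(a_j-1)$ is continuous. So $F$ has downward jumps, and the slab bound $n!\,\#(\text{slab})\le\delta\prod_{j\ne i}(a_j-1)$ fails for small $\delta$ at such values. Even a correct monotonicity statement could only lower large intercepts towards $v$. It cannot handle intercepts already below $v$, and at base points like the example above one has $F<p_n(v)$.

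Two smaller inaccuracies:
\begin{itemize}
\item Intercepts below $2$ do occur, e.g.\ $xz+y^3+z^5$ with intercepts $(5,3,5/4)$. The paper's pairing argument then forces $p_g=0$.
\item The homogeneous count is $\binom vn$, not $\binom{v-1}n$, as you eventually note.
\end{itemize}
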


We use inverse normalized weights
$a_1\ge\cdots\ge a_n>1$, so that each monomial $z^\alpha$ occurring
in $f$ satisfies $\sum_i\alpha_i/a_i=1$. In this notation,
\[
 p_g=P_n(a):=
 \#\left\{x\in\mathbb Z_{>0}^n:
                 \sum_{i=1}^n\cfrac{x_i}{a_i}\le1\right\},
 \qquad
 \mu=\prod_{i=1}^n(a_i-1).
\]
Our first step is an estimate that applies to arbitrary real
intercepts, without assuming that they arise from a singularity.

\begin{thmletter}\label{thm:lattice-introduction}
Let $n\ge3$ and let $r\ge2$ be an integer. If
$a_1\ge\cdots\ge a_n\ge r$ are real numbers, then
\[
 n!P_n(a)\le\prod_{i=1}^n(a_i-1)-p_n(r).
\]
Equality holds if and only if $a_1=\cdots=a_n=r$.
\end{thmletter}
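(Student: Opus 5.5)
We argue by induction on $n$, with the slicing recursion for $P_n$ as the basic tool. Fixing the last coordinate $x_n=k$ with $1\le k\le \lfloor a_n(1-1/a_n)\rfloor$, the remaining coordinates satisfy $\sum_{i<n}x_i/a_i\le 1-k/a_n$, i.e. they count points in the simplex with intercepts $a_i'=a_i(1-k/a_n)$. Hence
\[
 P_n(a)=\sum_{k\ge1}P_{n-1}\bigl(a_1(1-\tfrac{k}{a_n}),\dots,a_{n-1}(1-\tfrac{k}{a_n})\bigr).
\]
Slicing along the \emph{smallest} intercept $a_n$ keeps the number of slices small and keeps the sliced intercepts ordered. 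The inductive hypothesis needs a lower bound on the intercepts, so we prove a strengthened statement: the inequality holds for all real $a_1\ge\dots\ge a_n\ge r$, with an explicit uniform form of $p_n(r)$ that survives slicing. The base case $n=3$ (and possibly $n=2$ as a starting point) is obtained from Xu--Yau-type direct counting, or by an elementary count for triangles followed by one slicing step.

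The key reduction is a \emph{monotonicity/deformation argument}. Define
\[
 F(a)=\prod_i(a_i-1)-n!P_n(a).
\]
The goal is $F(a)\ge p_n(r)$, with equality only at $a=(r,\dots,r)$. The function $P_n$ is a step function that jumps when the hyperplane passes lattice points, while $\prod(a_i-1)$ grows continuously. We therefore show that, on each region where $P_n$ is constant, $F$ is increasing in each $a_i$. This is immediate, since $\partial F/\partial a_i=\prod_{j\ne i}(a_j-1)>0$. So it suffices to check $F$ at the points just after a jump, i.e. at configurations where lattice points lie on the boundary hyperplane. The real content is to control the jump of $P_n$ as $a_j$ increases by a unit, against the increase $\prod_{j\ne i}(a_j-1)$ of the product. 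By the slicing formula, this jump is bounded by an $(n-1)$-dimensional count, and the induction hypothesis applies to it.

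Concretely, we compare $F(a)$ with $F(a-e_1)$, decreasing the largest intercept by $1$ while it stays $\ge a_2$ (or $\ge r$). One computes
\[
 F(a)-F(a-e_1)=\prod_{j\ge2}(a_j-1)-n!\,[P_n(a)-P_n(a-e_1)].
\]
The jump $P_n(a)-P_n(a-e_1)$ counts lattice points in a thin slab. Projecting the slab along $x_1$ injects these points into an $(n-1)$-simplex with intercepts $a_2,\dots,a_n$, so the jump is at most $P_{n-1}(a_2,\dots,a_n)$ plus boundary terms. Applying induction,
\[
 (n-1)!\,P_{n-1}\le\prod_{j\ge2}(a_j-1)-p_{n-1}(r),
\]
and this should give the nonnegativity of the difference. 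Iterating the reduction brings all intercepts down toward the configuration $(r,\dots,r)$, or into $[r,r+1)^n$, where we verify the inequality directly. In that range $P_n(a)=\binom{r}{n}$ when all intercepts equal $r$, consistent with $p_n(r)$.

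\textbf{Main obstacle.} The hard step is the slab estimate. A factor $n!$ multiplies the jump while only $(n-1)!$ is available from induction, so we must save a factor $n$. We would try to get it by averaging over which coordinate is decreased: choose the rescaling $a\mapsto a-t(a-r\mathbf 1)$ and differentiate in $t$. Then the total boundary count is a sum over the facets weighted by $a_i-r$, and Euler's relation for the homogeneous volume supplies the factor $n$. The equality characterization comes from strictness of the slab inequality whenever some $a_i>r$.
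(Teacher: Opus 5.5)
\textbf{There is a genuine gap: the central reduction is false, not merely hard to prove.}

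Your proposal needs $F(a)\ge F(a-e_1)$ when the largest intercept is lowered by one. Take $n=3$ and intercepts $(a_1,3,3)$. Only $x_2=x_3=1$ is admissible, so $P_3(a_1,3,3)=\lfloor a_1/3\rfloor$ and
\[
 F(6,3,3)=20-12=8<10=16-6=F(5,3,3),
\]
although $a_1-1=5\ge a_2$ and all intercepts are $\ge r$ for $r=2$ or $3$. The slab is the single point $(2,1,1)$, so $3!$ times the jump is $6>4=\prod_{j\ge2}(a_j-1)$. Within one unit step there is no factor $n$ to save.

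The suggested repair does not help either. Along the ray $(6-3t,3,3)$ towards $(3,3,3)$ one has $F=8$ at $t=0$ but $F=14-12t$ for small $t>0$, so $F$ is not monotone along that path. Euler's relation holds for the homogeneous volume $\prod_i a_i/n!$, not for the step function $P_n$. The saving of $1/n$ exists only on average. Over each $x'=(x_2,\dots,x_n)$ the slab is an $x_1$-interval of length $s=1-\sum_{j\ge2}x_j/a_j$, and it is the mean of $s$ over the slice, about $1/n$, that carries the gain. An induction hypothesis about unweighted counts cannot control these weighted sums one slab at a time.

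\textbf{The missing idea is to recover the factor $n$ by summing over all slices} of the recursion you wrote at the start, which is what the paper does.
\begin{itemize}
\item Slice along $x_n=R-l$ with $t=a_n$, $R=\lfloor t\rfloor$, $\beta=t-R$. Your range $k\le\lfloor a_n-1\rfloor$ drops the slice $x_n=\lfloor a_n\rfloor$, which can be nonempty when $a_n\notin\mathbb Z$.
\item With $a'=(a_1,\dots,a_{n-1})$, the $l$-th slice is $\Delta_{n-1}\bigl(\tfrac{\beta+l}{t}a'\bigr)$, whose intercepts are all $\ge l$. So for $l\ge2$ the induction hypothesis is applied with lower bound $l$, not $r$; this is why the theorem is formulated for an arbitrary integer $r$.
\item The Yau--Zhang bound $n!P_n\le\prod(a_i-1)$ covers the slices $l=0,1$.
\item Summing the slice bounds into $U$, the identity $n\sum_{l=1}^{R-1}(l)_{n-1}=(R)_n$ assembles the terms $p_{n-1}(l)$ into $p_n(R)$.
\item Write $a_i=t+\delta_i$ and expand in the elementary symmetric functions $e_j(\delta)$. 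Every coefficient of $e_j(\delta)$ in $\prod(a_i-1)-p_n(R)-U$ is positive by the estimate $n\sum_{k\ge1}((t-k)_+)^{n-1}<(t-1)t^{n-1}$. This is a discrete form of $\int_0^t s^{n-1}\,ds=t^n/n$ and is exactly the source of the $1/n$.
\item The constant term is at least $\beta^n$.
\item Monotonicity of $p_n$ on integers gives $p_n(R)\ge p_n(r)$.
\item Tracing equality forces $R=r$, $\beta=0$ and all $\delta_i=0$.
\end{itemize}
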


\begin{remark}

Under the additional assumption $P_n(a)>0$, the main theorem of
\cite{LYZ26} gives
\[
n!P_n(a)\le\prod_{i=1}^n(a_i-1)-p_n(a_n).
\]
When $a_n=r$ is an integer, this is precisely the inequality in Theorem~\ref{thm:lattice-introduction}. 
For general $a_n\ge r$, the relative strength of the two estimates is determined by the
comparison between $p_n(a_n)$ and $p_n(r)$. 
However, there is no uniform ordering between $p_n(a_n)$ and $p_n(r)$
as $n$, $r$, and $a_n$ vary in the above range. For example, set \(r=2,\, a_n=5/2\) and we have
\[
p_4\!\left(\frac52\right)=6>p_4(2)=1,\qquad p_9\!\left(\frac52\right)
=-\frac{68121}{256}<p_9(2)=1.
\]

\end{remark}

This estimate follows by induction from the three-dimensional sharp
bound of Xu and Yau \cite{XY92}, with the estimate of Yau and Zhang
\cite{YZ06} handling the slices for which the slice parameter equals one. It proves
Theorem~\ref{thm:main-introduction} whenever the smallest inverse weight
is an integer.

For the remaining case, set $v=\lceil a_n\rceil$ and $u=v-1$.
The isolated singularity condition forces each inverse weight
$b\in(u,v)$ to have a partner $c=b/(b-u)>v$, with distinct partner
variables for the variables whose inverse weights are below $v$. This pairing permits
a lower estimate for the Milnor product. Replacing each small
intercept by $v$ gives an upper estimate for the lattice-point count,
which we compute by fixing the corresponding coordinates simultaneously.
After expansion, all required coefficient comparisons follow from
one estimate for the highest-degree coefficient. A midpoint
inequality on unit cubes supplies that estimate for $n\ge7$.

The paper is organized as follows. Section~\ref{sec:preliminaries}
fixes the notation, proves the
coordinate-axis relation, and recalls the external estimates.
Section~\ref{sec:integer-reference} proves
Theorem~\ref{thm:lattice-introduction} (see Theorem \ref{thm:integer-reference}), including its equality
condition. Section~\ref{sec:ygc-proof} proves the geometric inequality
for $n\ge7$ and combines it with the known low-dimensional results.

\begin{ack}
AI tools (ChatGPT and DeepSeek) were used during
the preparation of the paper as technical assistants to help with checking calculations and
arguments, developing details of proofs using standard techniques, and improving the exposition. All mathematical statements, proofs, and conclusions were verified by the authors.  H. Zuo was supported by NSFC Grant 12671056 and BJNSF Grant 1252009. Yau was supported by the Tsinghua University Education Foundation.

\end{ack}

\section{Preliminaries}\label{sec:preliminaries}

\subsection{The invariants and right equivalence}

We use the conventions on singularities and critical points in
\cite{GGL07}. Let $\mathcal O_n=\mathbb C\{z_1,\cdots,z_n\}$ and let
$\mathfrak m=(z_1,\cdots,z_n)$ be its maximal ideal. For a holomorphic
function germ \(f\) with an isolated singularity at the origin, its Milnor
number and multiplicity are
\[
 \mu(f)=\dim_{\mathbb C}
 \cfrac{\mathcal O_n}{(\partial f/\partial z_1,\cdots,
                       \partial f/\partial z_n)},
 \qquad
 v(f)=\operatorname{ord}_0f.
\]

Let $(V,0)$ be an isolated hypersurface singularity of complex
dimension $s\ge2$, and let
\[
 \pi\colon(M,E)\longrightarrow(V,0),\qquad E=\pi^{-1}(0),
\]
be a resolution of the singularity. Its geometric genus is
\[
 p_g(V,0)=\dim_{\mathbb C}H^{s-1}(M,\mathcal O_M),
\]
where $\mathcal O_M$ is the sheaf of holomorphic functions on $M$.
This number is independent of the chosen resolution. Here the resolution is taken over a sufficiently
small local representative of the germ, and the cohomology is computed
on the corresponding $M$. If $V=\{f=0\}\subset\mathbb C^n$ and
$n\ge3$, then $s=n-1$, so
\[
 p_g(V,0)=\dim_{\mathbb C}H^{n-2}(M,\mathcal O_M).
\]
For a surface singularity in $\mathbb C^3$, this is
$\dim_{\mathbb C}H^1(M,\mathcal O_M)$.

We use the standard fact that the Milnor number $\mu$,
the geometric genus $p_g$, and the multiplicity $v$
are invariant under right equivalence.

\subsection{Weights and lattice points}

Suppose that
\[
 f(z)=\sum_{\alpha\in\mathbb Z_{\ge0}^n}c_\alpha z^\alpha
\]
is weighted homogeneous of type $(\boldsymbol w;d)$ with an isolated singularity at the origin, in the notation
of \cite{GGL07}. Thus $w_i$ and $d$ are positive integers and
\[
 d=\sum_{i=1}^nw_i\alpha_i
 \qquad\text{whenever }c_\alpha\ne0.
\]
By the Euler relation $d \cdot f=\sum_iw_iz_i\partial f/\partial z_i$, we get \(\operatorname{Sing}V_f=\operatorname{Crit}f\), where \(V_f=\{f=0\}\).

We use the inverse normalized weights
\[
 a_i=\cfrac d{w_i},\qquad q_i=\cfrac1{a_i}=\cfrac{w_i}{d}.
\]
Then weighted homogeneity takes the form
\begin{equation}\label{eq:weighted-homogeneity}
 \sum_{i=1}^n\cfrac{\alpha_i}{a_i}
   =\sum_{i=1}^nq_i\alpha_i=1
 \qquad\text{for every }\alpha\in\mathbb Z_{\ge0}^n
              \text{ with }c_\alpha\ne0.
\end{equation}
After reordering the variables, we assume
\begin{equation}\label{eq:ordered-weights}
 a_1\ge a_2\ge\cdots\ge a_n>1,
 \qquad 0<q_1\le q_2\le\cdots\le q_n<1.
\end{equation}
The $a_i$ are the weights used in \cite{LRYZ18}; geometrically,
they are the intercepts of the simplex defining the lattice-point
count below. This convention differs from the integer weights
$w_i$ in \cite{GGL07}.

For a vector $a=(a_1,\cdots,a_n)$ of positive real numbers, write
\[
 \Delta_n(a)=\left\{x=(x_1,\cdots,x_n)\in\mathbb Z_{>0}^n:
                         \sum_{i=1}^n\cfrac{x_i}{a_i}\le1\right\},
 \qquad P_n(a)=\#\Delta_n(a).
\]
The geometric genus lattice-point formula, attributed to
Merle and Teissier \cite{MT80}, gives
\begin{equation}\label{eq:pg-lattice}
 p_g=P_n(a).
\end{equation}
We use the weighted homogeneous form of this formula recalled in
\cite[p.~218]{YZ15} and \cite[p.~602]{LRYZ18}.
The Milnor--Orlik formula \cite[Theorem~1]{MO70} gives
\begin{equation}\label{eq:milnor-product}
 \mu=\prod_{i=1}^n(a_i-1).
\end{equation}
Throughout the paper, polynomial exponents lie in
$\mathbb Z_{\ge0}^n$, whereas the lattice points counted by $P_n$
have strictly positive coordinates.

Then, we recall a lemma of Saito concerning the monomials
of an isolated singularity.

\begin{lemma}[Saito, {\cite[Lemma~1.5]{Sai71}}]
\label{lem:saito}
Let $f\in\mathbb C[[x_1,\ldots,x_n]]$ have an isolated
singularity at the origin. Then, for every nonempty subset
$J\subset\{1,\ldots,n\}$, at least one of the following holds:
\begin{enumerate}
 \item The series $f$ contains a monomial with nonzero
 coefficient involving only the variables $x_j$ with $j\in J$.

 \item There exists a subset
 \[
  K\subset\{1,\cdots,n\}\setminus J,
  \qquad |K|=|J|,
 \]
 such that, for every $k\in K$, the series $f$ contains
 a monomial $x_km_k$ with nonzero coefficient, where $m_k$
 is a monomial involving only the variables $x_j$ with $j\in J$.
\end{enumerate}
\end{lemma}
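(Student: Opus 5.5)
The plan is to assume that alternative (1) fails for a given $J$ and to deduce (2) from a dimension count on the coordinate subspace spanned by the variables in $J$. Throughout, "isolated singularity" is read algebraically: the Jacobian ideal $\mathfrak j(f)=(\partial f/\partial x_1,\ldots,\partial f/\partial x_n)$ is primary to the maximal ideal of $R=\mathbb C[[x_1,\ldots,x_n]]$. Equivalently, the Milnor algebra $R/\mathfrak j(f)$ is finite dimensional.

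Let $J$ be nonempty and set $S=\mathbb C[[x_j: j\in J]]$. Let $\rho\colon R\to S$ be the surjection sending $x_i\mapsto0$ for $i\notin J$. Assume (1) fails, so no monomial of $f$ involves only variables from $J$; this says exactly that $\rho(f)=0$. First, for $j\in J$, differentiation in $x_j$ commutes with $\rho$, so $\rho(\partial f/\partial x_j)=\partial_{x_j}\rho(f)=0$. Second, for $k\notin J$, write $f$ according to its degree in the variables outside $J$. Then
\[
 g_k:=\rho\Bigl(\frac{\partial f}{\partial x_k}\Bigr)=\sum c_{\alpha}\,m_\alpha,
\]
where the sum runs over the monomials of $f$ of the form $x_k m_\alpha$ with $m_\alpha$ a monomial in the variables $x_j$, $j\in J$. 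Hence $g_k\neq0$ if and only if $f$ contains a monomial $x_km_k$ as in (2).

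Since $\rho$ is surjective and $\mathfrak j(f)$ is $\mathfrak m$-primary, the image ideal $\rho(\mathfrak j(f))S$ is primary to the maximal ideal of $S$; indeed $S/\rho(\mathfrak j(f))S$ is a quotient of $R/\mathfrak j(f)$, so it is finite dimensional. By the two observations above, this ideal is generated by those $g_k$ with $k\notin J$ and $g_k\ne0$. Now $S$ is a regular local ring of dimension $|J|$. By Krull's height theorem, an ideal primary to its maximal ideal needs at least $|J|$ generators. Therefore the set $K_0=\{k\notin J: g_k\neq0\}$ has at least $|J|$ elements. Choosing any $K\subset K_0$ with $|K|=|J|$ gives (2). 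The degenerate case $J=\{1,\ldots,n\}$ is consistent with this: if (1) failed there, then $f=0$, and the zero ideal of $S=R$ is not primary, which contradicts the isolated singularity hypothesis.

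The only real point is the step invoking the height theorem, together with the remark that $\mathfrak m$-primariness passes to the quotient $S$. Everything else is bookkeeping of monomials under $\rho$.
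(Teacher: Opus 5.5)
The paper gives no proof of this lemma; it simply quotes it from Saito. Your argument is correct and is the standard proof of this criterion. Once (1) fails, $\rho(f)=0$, so the partials in the $J$-directions restrict to zero on the coordinate subspace $\{x_i=0,\ i\notin J\}$. The restricted partials $g_k$ with $k\notin J$ are nonzero exactly for those $k$ with a monomial $x_km_k$ in $f$. These $g_k$ generate an ideal primary to the maximal ideal $\mathfrak m_S$ of the $|J|$-dimensional regular local ring $S$, so by the height theorem at least $|J|$ of them are nonzero.

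One small correction to your setup. Primariness of $\mathfrak j(f)$ is not equivalent to $\dim_{\mathbb C}R/\mathfrak j(f)<\infty$. If $f$ has a linear term, then $\mathfrak j(f)=R$, the quotient is $0$, and the lemma is false: take $n=3$, $f=x_1$, $J=\{2,3\}$. Your step ``finite dimensional, hence primary'' in $S$ actually relies on $\mathfrak j(f)\subseteq\mathfrak m$. That inclusion gives $\rho(\mathfrak j(f))\subseteq\mathfrak m_S$, which is the properness you need. It holds automatically in the paper's applications, where $f$ has no constant or linear terms.

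Your argument also works unchanged if ``isolated singularity'' is read as finiteness of the Tjurina algebra $R/(f,\mathfrak j(f))$. The extra generator $f$ maps to $\rho(f)=0$ once (1) fails.
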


\begin{corollary}\label{cor:axis}
We have $v=\lceil a_n\rceil$. Moreover, at least one of the following
types of monomials occurs in $f$ with nonzero coefficient:
\begin{enumerate}
 \item $z_n^v$, in which case $a_n=v$;
 \item $z_n^{v-1}z_j$ for some $j\ne n$, in which case
 \begin{equation}\label{eq:axis-relation}
  (v-1)q_n+q_j=1.
 \end{equation}
\end{enumerate}
In particular, if $a_n$ is not an integer, then for such an index $j$,
\[
 a_n-\lfloor a_n\rfloor=\cfrac{a_n}{a_j}.
\]
\end{corollary}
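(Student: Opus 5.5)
We apply Lemma~\ref{lem:saito} with the singleton $J=\{n\}$. Alternative (1) says that $f$ contains a pure power $z_n^m$ with nonzero coefficient. Alternative (2) gives a single index $k\ne n$ such that $f$ contains $z_kz_n^m$ for some $m\ge0$. We then translate these monomials into conditions on the weights using \eqref{eq:weighted-homogeneity}, and compare with the multiplicity.

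\emph{The case of a pure power.} If $z_n^m$ occurs, then $mq_n=1$, so $a_n=m$ is an integer and $z_n^{a_n}$ occurs.

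\emph{The case of a mixed monomial.} If $z_kz_n^m$ occurs, then $q_k+mq_n=1$. Since $q_k\le q_n$, this gives $(m+1)q_n\ge1$, that is, $m+1\ge a_n$. Also $q_k>0$ gives $mq_n<1$, so $m<a_n$. Hence $m=\lceil a_n\rceil-1$ when $a_n\notin\mathbb Z$. If $a_n$ is an integer, the two bounds leave $m=a_n-1$, which forces $q_k=q_n$; in that case we simply record the monomial $z_kz_n^{a_n-1}$.

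\emph{Identifying the multiplicity.} Every monomial $z^\alpha$ of $f$ satisfies $\sum_i\alpha_iq_i=1$. Since $q_i\le q_n$, we get $|\alpha|q_n\ge1$, so $|\alpha|\ge a_n$ and therefore $|\alpha|\ge\lceil a_n\rceil$. Thus $v\ge\lceil a_n\rceil$. Both cases above produce a monomial of degree exactly $\lceil a_n\rceil$: namely $z_n^{a_n}$ in the first case and $z_kz_n^{m}$ with $m+1=\lceil a_n\rceil$ in the second. So $v=\lceil a_n\rceil$. This in turn identifies the monomial as $z_n^v$ with $a_n=v$, or as $z_n^{v-1}z_j$ with $j=k$ satisfying \eqref{eq:axis-relation}. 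Note that $f\in\mathfrak m^2$, because $f$ is singular at $0$, so $v\ge2$; this is consistent with $a_n>1$.

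\emph{The non-integer case.} If $a_n\notin\mathbb Z$, then case (1) is impossible, so \eqref{eq:axis-relation} holds. Multiplying it by $a_n$ gives $(v-1)+a_n/a_j=a_n$. Since $v-1=\lfloor a_n\rfloor$ when $a_n$ is not an integer, this is $a_n-\lfloor a_n\rfloor=a_n/a_j$.

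The only delicate point is that Saito's lemma does not directly give the degree $m$. We pin it down using the ordering \eqref{eq:ordered-weights} to get $m+1\ge a_n$ and the positivity $q_k>0$ to get $m<a_n$; the rest is routine.
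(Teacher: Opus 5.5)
Your proof is correct and follows essentially the same route as the paper. You get $v\ge\lceil a_n\rceil$ from $|\alpha|q_n\ge1$, apply Saito's lemma with $J=\{n\}$, and use the squeeze $m<a_n\le m+1$ (from $0<q_k\le q_n$) to see that the monomial produced has degree exactly $\lceil a_n\rceil$; your observation that $m=0$ is excluded by $m+1\ge a_n>1$ replaces the paper's remark that $f$ has no linear terms.
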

\begin{proof}
Write $f(z)=\sum_\alpha c_\alpha z^\alpha$.
For every $\alpha\in\mathbb Z_{\ge0}^n$ with $c_\alpha\ne0$,
weighted homogeneity gives
\[
 1=\sum_{i=1}^nq_i\alpha_i
   \le q_n\sum_{i=1}^n\alpha_i.
\]
Hence
\[
 v=\operatorname{ord}_0f
   =\min_{c_\alpha\ne0}\sum_{i=1}^n\alpha_i
   \ge\lceil a_n\rceil.
\]

Applying Lemma~\ref{lem:saito} with $J=\{n\}$, we find that
at least one of the following holds:
\begin{enumerate}
 \item $f$ contains the monomial $z_n^s$ with nonzero coefficient;
 \item $f$ contains the monomial $z_jz_n^r$ with nonzero
 coefficient for some $j\ne n$.
\end{enumerate}
Here $s\ge2$ and $r\ge1$, since $f$ has no constant or linear terms.

In the first case, weighted homogeneity gives
\[
 sq_n=1,
 \qquad\text{so}\qquad a_n=s\in\mathbb Z.
\]
Since $z_n^s$ occurs in $f$, we have
\[
 v\le \deg z_n^s= s=a_n=\lceil a_n\rceil.
\]
Together with the opposite inequality proved above, this gives
$v=\lceil a_n\rceil=s$.

In the second case, weighted homogeneity gives
\[
 rq_n+q_j=1.
\]
Since $0<q_j\le q_n$, we obtain
\[
 r<a_n=r+\cfrac{q_j}{q_n}\le r+1.
\]
Thus $\lceil a_n\rceil=r+1$.
Since $z_jz_n^r$ occurs in $f$, we also have
\[
 v\le \deg (z_jz_n^r)=  r+1=\lceil a_n\rceil.
\]
Consequently, $v=\lceil a_n\rceil$ and $r=v-1$, so
\[
 (v-1)q_n+q_j=1.
\]

If $a_n$ is not an integer, only the second case can occur.
Multiplying \eqref{eq:axis-relation} by $a_n$ gives
\[
 v-1+\cfrac{a_n}{a_j}=a_n.
\]
Since $v-1=\lfloor a_n\rfloor$, it follows that
\[
 a_n-\lfloor a_n\rfloor=\cfrac{a_n}{a_j}.
\]
\end{proof}

\subsection{External estimates}

We recall the three estimates used in the proofs.

\begin{theorem}[Yau--Zhang, {\cite[Theorem~1.1]{YZ06}}]
\label{thm:rough-gly}
Let $n\ge3$ and let $a_1\ge\cdots\ge a_n\ge1$ be real numbers.
Then
\[
 n!P_n(a)\le\prod_{i=1}^n(a_i-1).
\]
Equality holds if and only if $a_n=1$. In particular, the inequality
is strict when all the intercepts are greater than one.
\end{theorem}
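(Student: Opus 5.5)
We propose to prove Theorem~\ref{thm:rough-gly} by induction on $n$, slicing the simplex along the last coordinate and using an integral comparison in each slice. The statement is stated for all $n\ge3$. The natural base is $n=1$ or $n=2$, where the claims $P_1(a)=\lfloor a_1\rfloor\le a_1-1$ (false in general) and $2P_2(a)\le(a_1-1)(a_2-1)$ need adjusting. We will therefore start the induction from the two-dimensional estimate
\[
 2P_2(a)\le(a_1-1)(a_2-1)\quad\text{for } a_1\ge a_2\ge1,
\]
which we establish directly. We write $P_2(a)=\sum_{x_2=1}^{\lfloor a_2\rfloor-1}\lfloor a_1(1-x_2/a_2)\rfloor$, allowing the term with $x_2=\lfloor a_2\rfloor$ only if it contributes. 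We then compare this with the area of the triangle cut off from the quadrant $x_1,x_2\ge1$, which is exactly $(a_1-1)(a_2-1)/2$ after the shift $x\mapsto x-\mathbf 1$.

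The main idea is the shift. A positive lattice point $x$ with $\sum x_i/a_i\le1$ gives the unit cube $x-[0,1]^n$, contained in $\{y\ge0:\sum y_i/a_i\le1\}$. This alone yields only $P_n(a)\le a_1\cdots a_n/n!$. To get $\prod(a_i-1)$, we use the cube $[x-\mathbf 1,x]$ with its lower corner, and the finer region
\[
 S=\Bigl\{y\in\mathbb R_{\ge0}^n:\sum_i \frac{y_i}{a_i-1}\le1\Bigr\},\qquad \operatorname{vol}S=\frac{\prod(a_i-1)}{n!}.
\]
The task is to show that the lower corners $x-\mathbf 1$ of the lattice points lie in $S$ with room to spare, so that a fundamental domain around each fits inside $S$. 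Concretely, we slice by $x_n=k$. The slice is an $(n-1)$-dimensional problem with intercepts $a_i(1-k/a_n)$. By the induction hypothesis, its count is at most $\prod_{i<n}(a_i(1-k/a_n)-1)/(n-1)!$ whenever those intercepts are $\ge1$; otherwise the count is zero. Summing over $k$, we must show
\[
 \sum_{k=1}^{\lfloor a_n\rfloor}\frac{1}{(n-1)!}\prod_{i<n}\Bigl(a_i\bigl(1-\tfrac{k}{a_n}\bigr)-1\Bigr)_+\le\frac{1}{n!}\prod_{i=1}^n(a_i-1).
\]
The left side is a Riemann sum, right endpoints, of the decreasing function $g(t)=\prod_{i<n}(a_i(1-t/a_n)-1)_+$. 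It is therefore at most $\int_0^{\lfloor a_n\rfloor} g$. One then needs $\int_0^{a_n}g\,dt\le\prod(a_i-1)/n$, and this is the step we expect to be the main obstacle. Since $g$ is a product of linear factors, its integral is not simply $\prod(a_i-1)/n$. We would first try the substitution $s=1-t/a_n$. The bound then follows from the elementary inequality $a_n\int_0^1\prod_{i<n}(a_is-1)_+\,ds\le\frac1n\prod_{i\le n}(a_i-1)$. We would prove this via AM--GM or by log-concavity, comparing with the case of equal $a_i$, where it reduces to a one-variable check. If the right-endpoint sum is too weak, we would use a midpoint or trapezoid refinement, exploiting the convexity of $g$.

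For the equality statement: if $a_n=1$, there are no positive lattice points and both sides vanish. Conversely, when $a_n>1$, the Riemann-sum comparison is strict because $g$ is strictly decreasing on an interval of positive length. This gives strict inequality and completes the characterization.
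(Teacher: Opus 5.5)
Your proposal has two genuine gaps.

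\textbf{The base case is false.} The inequality you plan to ``establish directly'', $2P_2(a)\le(a_1-1)(a_2-1)$, fails already for $a_1=a_2=2$. There $P_2=1$ (the point $(1,1)$), so $2P_2=2>1=(a_1-1)(a_2-1)$. More generally, for $a_1=a_2=d\in\mathbb Z$ one has $2P_2=d(d-1)>(d-1)^2$. This is why the statement starts at $n\ge3$: for $a=(d,d,d)$ the margin is $(d-1)^3-d(d-1)(d-2)=d-1$.

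Consequently, your inductive step at $n=3$ has no valid hypothesis to apply to its two-dimensional slices. The case $n=3$ needs its own argument, for example from Theorem~\ref{thm:xu-yau} when $a_3\ge2$, plus a separate treatment of $1<a_3<2$.

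The supporting geometry is also off. The triangle $\{x_1,x_2\ge1,\ x_1/a_1+x_2/a_2\le1\}$ has area $\frac12(a_1-1-a_1/a_2)(a_2-1-a_2/a_1)$, not $\frac12(a_1-1)(a_2-1)$. The observation $x-\mathbf 1\in S$ is never turned into a counting bound.

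\textbf{The key analytic step is false, not merely hard.} This is the step you flag as the main obstacle, and it fails when you slice along the smallest intercept. For $n=3$ and $a=(A,A,c)$ with $A\ge c$,
\[
 a_3\int_0^1(As-1)_+^2\,ds=\frac{c(A-1)^3}{3A},\qquad
 \frac13\prod_{i=1}^3(a_i-1)=\frac{(A-1)^2(c-1)}{3}.
\]
So the required inequality $\int_0^{a_n}g\le\frac1n\prod(a_i-1)$ holds only when $A=c$, and fails for every $A>c$.

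Concretely, take $a=(10,10,3)$. The slice sum $\frac12\sum_k g(k)=169/9\approx18.78$ is below the target $\prod(a_i-1)/3!=27$. But $\frac12\int_0^3 g=36.45$ exceeds it. Passing from the right-endpoint sum to the integral throws away exactly the margin you need. No AM--GM or log-concavity argument can close that chain, and the unspecified midpoint/trapezoid fallback is where all the real work would lie.

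What does work is a discrete comparison:
\begin{enumerate}
 \item Write $a_i=a_n+\delta_i$.
 \item Expand the slice bounds and $\prod(a_i-1)$ in elementary symmetric functions of the $\delta_i$.
 \item Control the resulting coefficients with a power-sum estimate such as Lemma~\ref{lem:power-sum}, $(h+1)\sum_{k\ge1}((t-k)_+)^h<(t-1)t^h$.
\end{enumerate}
This is the mechanism in the proof of Theorem~\ref{thm:integer-reference}. It adapts to the present statement for $n\ge4$ once $n=3$ is settled. Your equality argument inherits both problems.
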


\begin{theorem}[Xu--Yau, {\cite[p.~200]{XY92}}]
\label{thm:xu-yau}
For real numbers $a\ge b\ge c\ge2$,
\[
 6P_3(a,b,c)\le(a-1)(b-1)(c-1)-c+1.
\]
Equality holds if and only if $a=b=c$ is an integer. 
\end{theorem}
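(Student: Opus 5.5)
The plan is to slice the tetrahedron along the smallest intercept, apply a sharp two-dimensional bound on each slice, and then sum over the slices. For an integer $k$ with $1\le k<c$, put $t_k=1-k/c$. The lattice points of $\Delta_3(a,b,c)$ with $x_3=k$ are exactly the points of $\Delta_2(at_k,bt_k)$. Hence
\[
 P_3(a,b,c)=\sum_{1\le k<c}P_2(at_k,bt_k).
\]
Both sides of the target inequality are monotone in the right way. The right-hand side is strictly increasing in each of $a,b,c$ on the region $a\ge b\ge c\ge2$, since its partial derivatives are $(b-1)(c-1)$, $(a-1)(c-1)$ and $(a-1)(b-1)-1$, all positive. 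Any reduction of the intercepts that leaves the lattice set unchanged therefore only strengthens the claim. I would use this freedom to push each parameter down to the point where some lattice point lies on the boundary face.

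\textbf{Two-dimensional lemma.} For real $\alpha\ge\beta\ge1$, I would first prove
\[
 2P_2(\alpha,\beta)\le(\alpha-2)(\beta-1)+\varepsilon(\alpha,\beta),
\]
where $\varepsilon$ is an explicit error term. It should vanish when $\alpha=\beta$ is an integer; the check $\alpha=\beta=N$ gives $2P_2=(N-1)(N-2)$, which is exact. To prove it, count by columns: $P_2=\sum_{1\le x<\alpha}\lfloor\beta(1-x/\alpha)\rfloor$. Pair the column $x$ with the column $\lfloor\alpha\rfloor-x$ and use $\lfloor s\rfloor+\lfloor t\rfloor\le\lfloor s+t\rfloor$. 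This bounds the sum by the trapezoid rule for a linear function, and the trapezoid rule is exact for linear functions, so the only loss comes from the endpoints. That loss produces the $-(\beta-1)$ correction. For small slices ($\beta t_k<2$), I would use instead the trivial bound $P_2\le\max(0,\lfloor\alpha t_k\rfloor-1)\cdot[\beta t_k\ge 2]$, or, on the range where it applies, Theorem~\ref{thm:rough-gly}.

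\textbf{Summation.} Substituting the lemma into the slice decomposition gives
\[
 6P_3\le 3\sum_{1\le k<c}\Big[(at_k-2)(bt_k-1)+\varepsilon_k\Big].
\]
The summand is a quadratic polynomial in $k$, so the sum can be evaluated in closed form with Faulhaber's formulas in $m=\lceil c\rceil-1$ and the fractional part of $c$. When $c=N$ is an integer, the comparison with $(a-1)(b-1)(c-1)-c+1$ becomes an explicit polynomial inequality in $a,b,N$. Its difference with the target should factor with nonnegative factors such as $(a-b)$, $(b-c)$ and $(c-2)$, and this would give the equality statement directly: equality forces $a=b=c$, and the column lemma then forces integrality.

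\textbf{Main obstacle.} I expect the hard step to be non-integral $c$, together with controlling the errors $\varepsilon_k$ uniformly. When $c\notin\mathbb Z$, the last slices are tiny, and the Faulhaber comparison picks up terms in the fractional part $\{c\}$ whose sign is not obvious. I would first try to reduce to the case $c\in\mathbb Z$ by monotonicity. Decrease $c$ to the largest value $c'\le c$ at which the lattice set first changes, or replace $c$ by $\lceil c\rceil$ while showing that $P_3$ grows by at least as much as the right-hand side does divided by $6$. If neither reduction works cleanly, I would fall back on a direct case analysis according to $\{c\}$, treating the final partial slice with the trivial bound above.
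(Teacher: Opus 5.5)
The paper gives no proof of this statement; it quotes it from Xu--Yau. Your outline therefore has to stand on its own, and its central ingredient, the two-dimensional lemma, is wrong. $P_2$ counts the closed triangle, and your own column formula gives $P_2(N,N)=\sum_{x=1}^{N-1}(N-x)=\binom N2$. So $2P_2(N,N)=N(N-1)$, not $(N-1)(N-2)$; the latter is twice the number of \emph{strictly interior} points. Any bound $2P_2(\alpha,\beta)\le(\alpha-2)(\beta-1)+\varepsilon(\alpha,\beta)$ therefore needs $\varepsilon(N,N)\ge 2(N-1)$. This is not negligible. The slices of $(N,N,N)$ are exactly $(j,j)$, $1\le j\le N-1$, and the target is an equality there, so your slice bound must be exact at every $(j,j)$. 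The column pairing you describe also does not produce a ``$-(\beta-1)$'' term: for integral $\alpha=N$ it gives $2P_2(N,\beta)=\sum_{x=1}^{N-1}\bigl(\lfloor\beta x/N\rfloor+\lfloor\beta(N-x)/N\rfloor\bigr)\le(N-1)\lfloor\beta\rfloor$, whose main term is $(\alpha-1)\beta$. Your fallbacks for small slices are false as well. The factor $[\beta t_k\ge2]$ makes your bound vanish on slices with $1<\beta t_k<2$, which can be large: for $(a,b,c)=(100,3.8,2)$ the only slice is $\Delta_2(50,1.9)$, which contains $23$ points. And Theorem~\ref{thm:rough-gly} is stated for $n\ge3$ and fails in dimension two, since $2P_2(N,N)=N(N-1)>(N-1)^2$.

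More fundamentally, all of the difficulty has been moved into the unspecified $\varepsilon_k$. These depend on the fractional parts of $at_k$ and $bt_k$, which are non-integral in general even when $c$ is an integer. So the summand is not a quadratic polynomial in $k$, and the Faulhaber step is unavailable. For example, the natural corrected bound $2P_2(\alpha,\beta)\le\alpha(\beta-1)$ holds for integral $\beta$ (bound each row $y$ by $\alpha(1-y/\beta)$) and is exact at $(N,N)$. But $P_2(5,5/2)=4$ gives $8>15/2$, and $(5,5/2)$ is the only slice of $(10,5,2)$.

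Your reduction to integral $c$ goes in the wrong direction. Passing from $c$ to $\lceil c\rceil$ raises the right-hand side by $(\lceil c\rceil-c)\bigl((a-1)(b-1)-1\bigr)$, while $P_3$ is integer-valued and may grow much less. For $(10,10,5/2)$ one has $6\bigl(P_3(10,10,3)-P_3(10,10,5/2)\bigr)=6(18-16)=12<40$. Moreover, $\lceil c\rceil$ may exceed $b$. Lowering $c$ until a lattice point reaches the face leaves $c$ non-integral, so it does not help either.

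What is missing is a valid two-dimensional estimate with explicit dependence on fractional parts, exact at every $(j,j)$, whose sum over slices stays below $(a-1)(b-1)(c-1)-c+1$ for all real $a\ge b\ge c\ge2$. Without it the proposal is a plan rather than a proof.
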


\begin{theorem}[Yau--Zuo, {\cite[Theorem~A]{YZ15}}]
\label{thm:yau-zuo}
Let $f\colon(\mathbb C^n,0)\to(\mathbb C,0)$ be a weighted
homogeneous polynomial with an isolated singularity at the origin.
If $\mu$ and $v$ denote its Milnor number and multiplicity, then
\[
 \mu\ge(v-1)^n.
\]
Equality holds if and only if $f$ is right equivalent to a
homogeneous polynomial.
\end{theorem}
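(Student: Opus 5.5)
The plan is to reduce the inequality to an elementary estimate on the inverse weights. By the Milnor--Orlik formula \eqref{eq:milnor-product}, $\mu=\prod_{i}(a_i-1)$, and by Corollary~\ref{cor:axis}, $v=\lceil a_n\rceil$. Put $u=v-1$ and split the indices. Let $L=\{i:a_i\ge v\}$ and $S=\{i:u<a_i<v\}$; every $a_i\ge a_n>u$, so this covers all indices. Each $i\in L$ already gives $a_i-1\ge u$. The work is to compensate for the factors $a_i-1<u$ with $i\in S$, using partner variables whose weights are forced to be large.

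\textbf{Step 1 (partners).} For $J\subset S$, the series $f$ has no monomial purely in $\{z_j\}_{j\in J}$. Indeed, such a monomial has degree at least $v$, so its weight is $>v\cdot(1/v)=1$, since every $q_j>1/v$. So by Lemma~\ref{lem:saito} there is $K_J$ with $|K_J|=|J|$, $K_J\cap J=\emptyset$, and monomials $z_km_k$ in $f$ with $m_k$ supported on $J$. Since $\deg(z_km_k)\ge v$, we get $\deg m_k\ge u$. Hence, with $\beta_J=\max_{j\in J}a_j$,
\[
 q_k=1-\operatorname{wt}(m_k)\le 1-\frac{u}{\beta_J},\qquad\text{so}\qquad a_k\ge \frac{\beta_J}{\beta_J-u}>v .
\]
In particular $K_J\cap S=\emptyset$, so $K_J\subset L$.

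\textbf{Step 2 (matching).} List the elements of $S$ as $\beta_1\le\cdots\le\beta_s$, and apply Step~1 to $J_t=\{$indices of $\beta_1,\dots,\beta_t\}$. This gives $t$ indices in $L$ with $a_k\ge g(\beta_t)$, where $g(b)=b/(b-u)$. Therefore the $t$-th largest inverse weight in $L$, call it $\gamma_t$, satisfies $\gamma_t\ge g(\beta_t)$. Pairing $\beta_t$ with $\gamma_t$ uses distinct indices. Each pair satisfies
\[
 (\beta_t-1)(\gamma_t-1)\ge(\beta_t-1)\frac{u}{\beta_t-u}\ge u^2,
\]
because $\beta_t-1-u(\beta_t-u)=(u-1)(v-\beta_t)\ge0$. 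The unpaired indices of $L$ contribute factors $\ge u$ each. Multiplying all these bounds gives $\mu\ge u^n=(v-1)^n$.

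\textbf{Step 3 (equality).} This is the delicate step. Equality forces equality in every factor.

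If $u\ge2$, the pair inequality is strict whenever $S\ne\emptyset$. So $S=\emptyset$, and then every $a_i=v$. Then $f$ is a weighted homogeneous polynomial with all weights equal, i.e.\ homogeneous of degree $v$.

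If $u=1$, i.e.\ $v=2$, equality means $\mu=1$. Then $f$ has a nondegenerate critical point, and by the Morse lemma it is right equivalent to $\sum z_i^2$, which is homogeneous.

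Conversely, for a homogeneous $f$ of degree $v$ with isolated singularity, all $a_i=v$ and $\mu=(v-1)^n$. Right-equivalence invariance of $\mu$ and $v$ then gives the converse for every $f$ right equivalent to a homogeneous polynomial.

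The main obstacle is Step~2: Saito's lemma applied to single indices does not give distinct partners. It is applied to the nested sets $J_t$, ordered so that the worst (largest) $\beta$ in each set is the one being matched, which makes the counting argument produce an injective pairing.
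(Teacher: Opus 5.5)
Your proof is correct. The paper does not prove this statement itself. It imports it from \cite[Theorem~A]{YZ15} and uses it as a black box in Steps~(2.1) and~(2.3) of the proof of Theorem~\ref{thm:high-dimensional}. Your argument is essentially the pairing mechanism the paper develops in Steps~(1) and~(3) of that same proof, with one genuine difference in how the injection is produced.

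The paper applies Saito's lemma to the level sets $I_q=\{j:q_j=q\}$ of the small weights. Then $m_k$ is supported on variables of the single weight $q$ with $\deg m_k=u$, which gives the exact relation $a_{\sigma(i)}=a_i/(a_i-u)$. Injectivity comes for free, because the partner weights $1-uq$ are distinct for distinct $q$. The exact relation produces the identity $(a_i-1)(a_{\sigma(i)}-1)=u^2+(u-1)\delta_{\sigma(i)}$ and the closed formula \eqref{eq:paired-milnor-product}. The paper needs these for its coefficient-by-coefficient comparison in the $\delta_j$.

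You instead apply the lemma to the nested initial segments $J_t$. This gives only the lower bound $a_k\ge\beta_t/(\beta_t-u)$, with injectivity coming from the counting $\gamma_t\ge g(\beta_t)$. That is weaker, but it is all that $\mu\ge u^n$ needs, together with its strictness when $u\ge2$ and $S\ne\varnothing$.

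Your separate treatment of $v=2$, via $\mu=1$ and the holomorphic Morse lemma, is necessary. For $u=1$ the pair bound has no slack, since the defect $(u-1)(v-\beta_t)$ vanishes, and equality does not force literal homogeneity. For instance, $z_1z_2+z_1^3+z_3^2$ has inverse weights $3,\,3/2,\,2$, $v=2$ and $\mu=1$, but is not homogeneous.

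Your argument uses only Saito's lemma, Corollary~\ref{cor:axis}, the Milnor--Orlik formula and the Morse lemma, so there is no circularity. It could therefore replace the citation and make those two steps of the paper self-contained.
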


\section{Lattice-point estimates with an integer lower bound}\label{sec:integer-reference}

In every coefficient expansion below, empty products are taken to be \(1\).
This convention applies in
\eqref{eq:general-product-expansion},
\eqref{eq:slice-product-expansion},
\eqref{eq:paired-milnor-product},
\eqref{eq:delta-product},
\eqref{eq:pstar-product-expansion},
\eqref{eq:A-definition}, and
\eqref{eq:T-definition},
and whenever the same factors occur subsequently. 

\begin{lemma}\label{lem:power-sum}
Let $h\ge2$ be an integer. For every real number $t>1$,
\[
 (h+1)\sum_{k\ge1}\bigl((t-k)_+\bigr)^h<(t-1)t^h,
 \qquad y_+:=\max\{y,0\}.
\]
\end{lemma}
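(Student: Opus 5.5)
We use the midpoint (Hermite–Hadamard) inequality for the convex function $g(s)=\bigl((t-s)_+\bigr)^h$. We keep the first term of the sum exactly and bound the remaining terms by an integral. This reduces the lemma to a polynomial inequality in $u=t-1$, which follows from two truncated binomial expansions.

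\emph{Small $t$.} For $1<t\le 2$ only $k=1$ contributes, and the claim reads $(h+1)s^{h-1}<(1+s)^h$ with $s=t-1\in(0,1]$. Keep three terms of the binomial expansion:
\[
(1+s)^h\ge hs^{h-1}+\tbinom h2 s^{h-2}+s^h .
\]
By AM--GM, $\binom h2 s^{h-2}+s^h\ge 2\sqrt{\binom h2}\,s^{h-1}\ge s^{h-1}$. Hence $(1+s)^h\ge (h+1)s^{h-1}$. The inequality is strict: for $h=2$, $s^2-s+1>0$; for $h\ge3$, there are additional positive binomial terms. The same argument covers the range $t<3/2$ in the next step, where again only $k=1$ contributes.

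\emph{The tail for $t\ge 3/2$.} The function $g$ is convex in $s$ because $h\ge2$. The midpoint inequality gives $g(k)\le\int_{k-1/2}^{k+1/2}g(s)\,ds$. Summing over $k\ge2$,
\[
\sum_{k\ge2}g(k)\le\int_{3/2}^{\infty}g(s)\,ds=\frac{(t-3/2)^{h+1}}{h+1}.
\]
With $u=t-1\ge 1/2$, it therefore suffices to prove
\[
(h+1)u^h+(u-\tfrac12)^{h+1}<u(u+1)^h .
\]

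\emph{Main step.} We bound the left term from above by truncating its Taylor expansion after the quadratic term:
\[
(u-\tfrac12)^{h+1}\le u^{h+1}-\tfrac{h+1}{2}u^h+\tfrac{(h+1)h}{8}u^{h-1}.
\]
This holds for $0\le \tfrac12\le u$ because the Lagrange remainder is $-\binom{h+1}{3}\xi^{h-2}(1/2)^3\le0$ for some $\xi\in[u-\tfrac12,u]$. On the right we truncate from below:
\[
u(u+1)^h\ge u^{h+1}+hu^h+\tbinom h2u^{h-1}.
\]
Subtracting, it is enough to show
\[
\tfrac{h-1}{2}u^h+\Bigl(\tfrac{h(h-1)}2-\tfrac{h(h+1)}8\Bigr)u^{h-1}>0 .
\]
The bracket equals $h(3h-5)/8>0$ for $h\ge2$, and $h-1>0$, so the inequality holds for every $u>0$.

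The expected obstacle is getting the truncation directions right: the Taylor bound for $(u-\tfrac12)^{h+1}$ must be an upper bound. The tail estimate alone, starting at $k=1$, would fail near $t=1$, which is why the $k=1$ term is kept exactly. Once that is arranged, the final coefficient comparison is immediate.
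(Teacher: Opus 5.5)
Your proof is correct, and it takes a genuinely different route from the paper's.

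\textbf{The paper's route.} The paper handles the base range $1<t\le2$ by showing that $\log(h+1)+(h-1)\log(t-1)-h\log t$ is nondecreasing on $(1,2]$ and negative at $t=2$. It then inducts over unit intervals, using $S_h(t+1)=S_h(t)+t^h$ and the two-term bound $(t+1)^h\ge t^h+ht^{h-1}$. Each inductive step reduces to $(t-1)t^h+(h+1)t^h=(t+h)t^h\le t(t+1)^h$.

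\textbf{Your route.} You avoid induction altogether. You isolate the $k=1$ term and dominate the tail $k\ge2$ by $\int_{3/2}^\infty((t-s)_+)^h\,ds$, using the midpoint (Hermite--Hadamard) inequality for a convex function. This is the same device the paper uses later on unit cubes in Step~(6) of Theorem~\ref{thm:high-dimensional}. You then finish with a third-order Taylor bound whose Lagrange remainder has the right sign because $u-\tfrac12\ge0$. Your base case also differs: binomial expansion plus AM--GM replaces the paper's calculus argument. That argument works for every $s>0$ and gives strictness immediately, since $2\sqrt{\binom h2}\ge2$.

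\textbf{What each buys.} The paper's induction is shorter and needs only the first two binomial terms. Your argument gives explicit information valid uniformly for $t\ge3/2$: a closed-form majorant $(h+1)(t-1)^h+(t-\tfrac32)^{h+1}$ for $(h+1)S_h(t)$, and the lower bound $\tfrac{h-1}{2}u^h+\tfrac{h(3h-5)}{8}u^{h-1}$ for the gap.

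\textbf{Two wording points.} The hypothesis for the Taylor step should read $u-\tfrac12\ge0$, not ``$0\le\tfrac12\le u$''. The remark about $t<3/2$ only says that this range already lies inside your first case.
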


\begin{proof}
Put
\[
 S_h(t):=\sum_{k\ge1}\bigl((t-k)_+\bigr)^h
        =\sum_{k\ge1}\max\{t-k,0\}^h.
\]
Only finitely many summands are nonzero. If $1<t\le2$, then
\[
 S_h(t)=(t-1)^h,
\]
so it suffices to prove
\[
 (h+1)(t-1)^{h-1}<t^h.
\]
This follows by differentiation. Indeed, set
\[
 g(t):=\log(h+1)+(h-1)\log(t-1)-h\log t.
\]
Then
\[
 g'(t)=\cfrac{h-1}{t-1}-\cfrac ht
      =\cfrac{h-t}{t(t-1)}.
\]
Thus $g'(t)\ge0$ for $1<t\le2$, while
\[
 g(2)=\log\!\left(\cfrac{h+1}{2^h}\right)<0
 \qquad(h\ge2).
\]
This proves the required inequality on $(1,2]$.

For every $t>1$, we have
\[
 S_h(t+1)=S_h(t)+t^h.
\]
Assume that
\[
 (h+1)S_h(t)<(t-1)t^h.
\]
We wish to show that
\[
 (h+1)S_h(t+1)<t(t+1)^h.
\]
The binomial expansion gives
\[
 (t+1)^h=\sum_{i=0}^h\dbinom hi t^i
 \ge t^h+ht^{h-1}.
\]
Consequently,
\[
\begin{aligned}
 (h+1)S_h(t+1)
   &=(h+1)\bigl(S_h(t)+t^h\bigr)\\
   &\le(t-1)t^h+(h+1)t^h\\
   &=(t+h)t^h<t(t+1)^h.
\end{aligned}
\]
The final inequality is strict because $h\ge2$, so the omitted terms
in the binomial expansion are positive. Induction over successive unit
intervals proves the lemma.
\end{proof}

\begin{lemma}\label{lem:pn-monotone}
For every integer $n\ge3$, the sequence $p_n(m)$ is strictly increasing
for integers $m\ge2$, where
\[
\begin{aligned}
 p_n(m)&=(m-1)^n-m(m-1)\cdots(m-n+1)\\
       &=(m-1)^n-n!\dbinom mn.
\end{aligned}
\]
\end{lemma}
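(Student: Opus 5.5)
The plan is to compare consecutive values directly, using the forward-difference identity for falling factorials. Since $(x+1)_n-(x)_n=n\,(x)_{n-1}$ (factor out $(x)_{n-1}$ from $(x+1)x\cdots(x-n+2)-x\cdots(x-n+1)$), we get
\[
 p_n(m+1)-p_n(m)=\bigl(m^n-(m-1)^n\bigr)-n\,(m)_{n-1}.
\]
It therefore suffices to prove, for every integer $m\ge2$,
\[
 m^n-(m-1)^n>n\,m(m-1)\cdots(m-n+2).
\]
If $m\le n-2$, the right-hand side vanishes because it contains the factor $0$. The left-hand side is positive, so this case is immediate. Hence we may assume $m\ge n-1$, so that all $n-1$ factors $m-j$ with $0\le j\le n-2$ are positive.

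For the left-hand side, write
\[
 m^n-(m-1)^n=\sum_{k=0}^{n-1}m^k(m-1)^{n-1-k}.
\]
Apply AM--GM to these $n$ positive terms. The geometric mean is $m^{(n-1)/2}(m-1)^{(n-1)/2}$, so
\[
 m^n-(m-1)^n> n\bigl(m(m-1)\bigr)^{(n-1)/2}.
\]
The inequality is strict because the terms are not all equal, as $m\ne m-1$ and $n\ge2$.

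For the right-hand side, I will show $(m)_{n-1}\le\bigl(m(m-1)\bigr)^{(n-1)/2}$ by pairing the factor $m-j$ with $m-(n-2-j)$. Each such pair multiplies to
\[
 (m-j)\bigl(m-n+2+j\bigr)=m^2-(n-2)m+j(n-2-j).
\]
This is at most $m^2-m$ exactly when $(n-3)m\ge j(n-2-j)$. That holds because $n\ge3$ and $0\le j\le n-2\le m$ give $j(n-2-j)\le (n-3)\,\max(j,n-2-j)\le(n-3)m$. When $n-1$ is odd, there is an unpaired middle factor $m-(n-2)/2$, and it is at most $m-1\le\sqrt{m(m-1)}$ once $n\ge4$; for $n=3$ there is no unpaired factor. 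Multiplying the pair bounds gives the claimed estimate. Combining it with the AM--GM bound yields the strict inequality, hence $p_n(m+1)>p_n(m)$.

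The only step needing care is this pairing estimate. In particular one must check the endpoint pair $j=0$, where the bound reads $m(m-n+2)\le m(m-1)$, and the case of odd $n-1$. Both are elementary, so I expect no serious obstacle.
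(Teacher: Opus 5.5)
Your proof is correct, and it follows the paper's overall plan. Both use the identity $(m+1)_n-(m)_n=n(m)_{n-1}$ and dispose of the case $m\le n-2$ the same way. For $m\ge n-1$, both trap $m^n-(m-1)^n$ and $n(m)_{n-1}$ on either side of a common intermediate quantity; what differs is that intermediate.

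\textbf{The paper's route.} The paper uses $n(m-\tfrac12)^{n-1}$. The lower bound $m^n-(m-1)^n=\int_{m-1}^m nx^{n-1}\,dx>n(m-\tfrac12)^{n-1}$ comes from strict convexity of $x^{n-1}$ (the midpoint inequality). The upper bound is plain AM--GM on the $n-1$ factors of $(m)_{n-1}$: their mean is $m-\tfrac{n-2}{2}$, which is at most $m-\tfrac12$ since $n\ge3$.

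\textbf{Your route.} You use the geometric mean of the endpoints, $n\bigl(m(m-1)\bigr)^{(n-1)/2}$. This makes the lower bound a calculus-free AM--GM on the terms $m^k(m-1)^{n-1-k}$. Since $\sqrt{m(m-1)}<m-\tfrac12$, more of the work falls on the factorial side, which you handle by pairing. The pairing is valid, but it is more than you need:
\begin{itemize}
\item for $n=3$ the required bound $(m)_2\le m(m-1)$ is an identity;
\item for $n\ge4$, plain AM--GM already gives $(m)_{n-1}\le(m-\tfrac{n-2}{2})^{n-1}\le(m-1)^{n-1}\le\bigl(m(m-1)\bigr)^{(n-1)/2}$.
\end{itemize}

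The two routes are equally elementary. Yours avoids the integral. The paper's treats all $n\ge3$ uniformly, with a single AM--GM step on the factorial side.
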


\begin{proof}
We use the notation
\[
 (m)_n:=n!\dbinom mn=m(m-1)\cdots(m-n+1),
\]
with the convention that $\dbinom mn=0$ when $m<n$. We have
\[
\begin{aligned}
 p_n(m+1)-p_n(m)
   &=m^n-(m-1)^n\\
   &\quad-\bigl((m+1)-(m-n+1)\bigr)(m)_{n-1}\\
   &=m^n-(m-1)^n-n(m)_{n-1}\\
   &=m^n-(m-1)^n-n!\dbinom m{n-1}.
\end{aligned}
\]
If $m\le n-2$, the binomial coefficient vanishes by convention, and
therefore
\[
 p_n(m+1)-p_n(m)=m^n-(m-1)^n>0.
\]
If $m\ge n-1$, the function $x\mapsto nx^{n-1}$ is strictly convex on
$[m-1,m]$, since $n\ge3$. Hence
\[
\begin{aligned}
 m^n-(m-1)^n
   &=\int_{m-1}^m nx^{n-1}\,dx\\
   &>n\left(m-\cfrac12\right)^{n-1}.
\end{aligned}
\]
On the other hand, the arithmetic--geometric mean inequality yields
\[
\begin{aligned}
 (m)_{n-1}
   &=m(m-1)\cdots(m-n+2)\\
   &\le\left(m-\cfrac{n-2}{2}\right)^{n-1}\\
   &\le\left(m-\cfrac12\right)^{n-1}.
\end{aligned}
\]
Combining these inequalities gives $p_n(m+1)-p_n(m)>0$.
\end{proof}

\begin{theorem}[Theorem~\ref{thm:lattice-introduction}, restated]\label{thm:integer-reference}
Let $n\ge3$ and $r\ge2$ be integers, and let the real intercepts
$a=(a_1,\cdots,a_n)\in\mathbb R^n$ satisfy $a_i\ge r$ for all $i$.
Write
\[
 P_n(a):=\#\left\{
 x=(x_1,\cdots,x_n)\in\mathbb Z_{>0}^n:
 \sum_{i=1}^n\cfrac{x_i}{a_i}\le1
 \right\}.
\]
Then
\[
 n!P_n(a)\le\prod_{i=1}^n(a_i-1)-p_n(r),
\]
with equality if and only if $a_1=\cdots=a_n=r$.
\end{theorem}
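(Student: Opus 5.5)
Proof by induction on $n$, slicing off the coordinate with the largest intercept. The base case $n=3$ is Theorem~\ref{thm:xu-yau}. Since $p_3(r)=(r-1)^3-r(r-1)(r-2)=r-1\le c-1$, it gives $6P_3(a)\le(a-1)(b-1)(c-1)-c+1\le\prod(a_i-1)-p_3(r)$. Equality forces $c=r$ and $a=b=c$ integral.

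For the inductive step, order $a_1\ge\cdots\ge a_n\ge r$ and fix the value $x_1=k\ge1$. The remaining coordinates range over $\Delta_{n-1}(t_ka')$, where $a'=(a_2,\ldots,a_n)$ and $t_k=1-k/a_1$. This gives
\[
 P_n(a)=\sum_{1\le k<a_1}P_{n-1}(t_ka_2,\ldots,t_ka_n).
\]
Slices are nonempty only while $t_ka_n\ge1$, so they fall into three types:
\begin{itemize}
 \item If $t_ka_n\ge r$, apply the inductive hypothesis with the integer $r$.
 \item If $1\le t_ka_n<r$, apply Theorem~\ref{thm:rough-gly}, and for $t_ka_n$ in ranges above an integer $r'\ge2$ use the inductive hypothesis with $r'$ in place of $r$.
 \item Slices with $t_ka_n=1$ are exactly those handled by the Yau--Zhang equality case.
\end{itemize}
Uniformly, we aim at the bound
\[
 (n-1)!P_{n-1}(t a')\le\prod_{i\ge2}(ta_i-1)-p_{n-1}(\lfloor ta_n\rfloor),
\]
with the convention that the correction term is $0$ when $\lfloor ta_n\rfloor\le1$.

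Summing over $k$, we then need to compare $n\sum_k\prod_{i\ge2}(t_ka_i-1)$ with $\prod_{i=1}^n(a_i-1)$. Expanding $\prod_{i\ge2}(t a_i-1)$ in powers of $t$ (this is the expansion \eqref{eq:slice-product-expansion} the paper announces) reduces the sum to power sums $\sum_k(a_1-k)^h$. Lemma~\ref{lem:power-sum} bounds $(h+1)\sum_k(a_1-k)_+^h$ strictly by $(a_1-1)a_1^h$. Thus each degree-$h$ term contributes at most its share of $(a_1-1)\prod_{i\ge2}(a_i-1)$, with a positive surplus.

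It remains to show that this surplus, together with the subtracted terms $n\sum_k p_{n-1}(\lfloor t_ka_n\rfloor)$, is at least $p_n(r)$. Here Lemma~\ref{lem:pn-monotone} is used: $p_{n-1}(\lfloor t_ka_n\rfloor)$ is increasing in the slice parameter. In the extremal configuration $a_i=r$ the slices have parameters $r-1,r-2,\ldots$. Then $\sum_{m=1}^{r-1}n\,p_{n-1}(m)$ must be matched against $p_n(r)$. The combinatorial identity
\[
 p_n(r)-p_n(r-1)=n\,p_{n-1}(r-1)+\bigl((r-1)^n-(r-2)^n-n(r-2)^{n-1}\bigr)
\]
follows from $(m)_n=m(m-1)_{n-1}$. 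Telescoping it, together with the surplus from Lemma~\ref{lem:power-sum}, should close the inequality.

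\textbf{Main obstacle.} The hard part is the bookkeeping for non-integer intercepts. The slice parameters $t_ka_n$ are then non-integral, and only $\lfloor t_ka_n\rfloor$ enters the correction. We must show that the loss from the floors is absorbed by the strict slack in Lemma~\ref{lem:power-sum}, and by the slack $ta_i-1\ge \lfloor ta_n\rfloor-1$ in the products. The plan is to first prove the statement for $a_1$ integral, which is the worst case for the power sums, and then use monotonicity of both sides in $a_1$ between consecutive integers. On such an interval the lattice count is constant while $\prod(a_i-1)$ strictly increases.

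\textbf{Equality case.} Equality requires equality in every slice and in Lemma~\ref{lem:power-sum}-type steps. This forces all slices to be extremal, hence $a_2=\cdots=a_n=r$ and then $a_1=r$. The converse is the direct count: for $a_1=\cdots=a_n=r$ one has $n!P_n(a)=(r)_n$.
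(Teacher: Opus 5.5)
There is a genuine gap. It sits exactly at what you call the main obstacle, and two of the steps you do state are false.

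First, the degree-by-degree comparison after expanding $\prod_{i\ge2}(ta_i-1)$ in powers of $t$ cannot work. The coefficient of $t^h$ is $(-1)^{n-1-h}e_h(a_2,\dots,a_n)$. So the terms alternate in sign, and the negative ones would need a \emph{lower} bound on $\sum_k t_k^h$. For the positive terms you need $n\sum_k t_k^h\le a_1-1$. Lemma~\ref{lem:power-sum} with exponent $h$ only controls $(h+1)\sum_k t_k^h$, and for $h\le n-3$ the required bound actually fails once $a_1$ is large. Only the top exponent $h=n-1$ matches Lemma~\ref{lem:power-sum}.

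The paper forces every coefficient down to that exponent. It expands in the nonnegative quantities $\delta_i=a_i-a_n$ and uses $s-1<(t-1)s/t$. In your slicing the analogue would be $t_ka_i-1=(t_ka_n-1)+t_k\delta_i$ with $0\le t_ka_n-1\le t_k(a_n-1)$. That repairs the non-constant coefficients. It does not repair the constant term, namely the comparison of $n\sum_k\bigl[(t_ka_n-1)^{n-1}-p_{n-1}(\lfloor t_ka_n\rfloor)\bigr]$ with $(a_1-1)(a_n-1)^{n-1}-p_n(r)$.

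Second, your plan for that constant term rests on a false claim: $P_n(a)$ is not constant as $a_1$ runs between consecutive integers. A point $x$ enters $\Delta_n(a)$ at $a_1=x_1/(1-\sum_{i\ge2}x_i/a_i)$, which is generally not an integer. For instance, $(1,1,1,1)\in\Delta_4(a_1,4,4,7/2)$ if and only if $a_1\ge14/3$. Even for integral $a_1$, your slice parameters $a_n(a_1-k)/a_1$ are non-integral and unevenly spaced, with many slices sharing a floor when $a_1\gg a_n$. So the telescoping you check in the configuration $a_i=r$ does not carry over. The equality case is likewise unsupported: you derive it from ``equality in Lemma~\ref{lem:power-sum}-type steps'', but that lemma is strict.

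The missing idea is to slice along the \emph{smallest} intercept $a_n=t=R+\beta$ instead.
\begin{itemize}
\item The slices $x_n=R-l$ have scaled intercepts $(\beta+l)a_i/t\ge l$. So the inductive hypothesis applies with the integer $l$ itself, and the corrections telescope exactly via $n\sum_{l=1}^{R-1}(l)_{n-1}=(R)_n$.
\item What remains is a one-variable function $E(\beta)$ with $E(0)=0$ and $E'(\beta)\ge n\beta^{n-1}$, hence $E\ge\beta^n$.
\item Lemma~\ref{lem:pn-monotone} then passes from $p_n(R)$ to $p_n(r)$.
\item The strict bounds $D_j>0$ and $E\ge\beta^n$ give the equality case.
\end{itemize}
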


\begin{proof}
We argue by induction on $n$. For $n=3$, put $t:=\min_i a_i\ge r$.
Since $p_3(r)=r-1$, Theorem~\ref{thm:xu-yau} gives
\[
\begin{aligned}
 6P_3(a_1,a_2,a_3)
   &\le(a_1-1)(a_2-1)(a_3-1)-(t-1)\\
   &\le(a_1-1)(a_2-1)(a_3-1)-(r-1).
\end{aligned}
\]
Equality holds in the first inequality if and only if
$a_1=a_2=a_3$ is an integer, and in the second if and only if $t=r$.
This proves the base case, including the equality condition.

Now let $n\ge4$ and put $d:=n-1\ge3$. Assume the theorem in dimension
$d$. Thus, for every integer $r\ge2$ and all $b_i\ge r$,
\[
 d!P_d(b_1,\cdots,b_d)
 \le\prod_{i=1}^d(b_i-1)-p_d(r),
\]
with equality if and only if $b_1=\cdots=b_d=r$.
Moreover, Theorem~\ref{thm:rough-gly} gives
\[
 d!P_d(b_1,\cdots,b_d)\le\prod_{i=1}^d(b_i-1)
 \qquad(b_i\ge1),
\]
with equality if and only if some $b_i=1$.

Consider $a=(a_1,\cdots,a_n)$ with $a_i\ge r\ge2$.
After reordering, we may assume
\[
 a_1\ge a_2\ge\cdots\ge a_n=:t\ge r\ge2.
\]
Set
\[
 R:=\lfloor t\rfloor,\qquad \beta:=t-R,
 \qquad a_i=t+\delta_i\quad(1\le i\le d),
\]
where $\delta_i\ge0$. For the slices below, also put
$a':=(a_1,\cdots,a_d)$. Let
\[
 \Delta_n(a):=\left\{
 x=(x_1,\cdots,x_n)\in\mathbb Z_{>0}^n:
 \sum_{i=1}^n\cfrac{x_i}{a_i}\le1
 \right\}.
\]
For $x\in\Delta_n(a)$, the inequality
\[
 \sum_{i=1}^d\cfrac{x_i}{a_i}+\cfrac{x_n}{t}\le1
\]
implies $x_n<t$, hence $x_n\le R$. We may therefore write
$x_n=R-l$, with $l=0,1,\cdots,R-1$, and obtain the disjoint
decomposition
\[
\begin{aligned}
 \Delta_n(a)
 &=\bigsqcup_{l=0}^{R-1}
 \left\{(x_1,\cdots,x_d,R-l)\in\mathbb Z_{>0}^n:
 \sum_{i=1}^d\cfrac{x_i}{a_i}+\cfrac{R-l}{t}\le1\right\}\\
 &=\bigsqcup_{l=0}^{R-1}
 \left\{(x_1,\cdots,x_d,R-l)\in\mathbb Z_{>0}^n:
 \sum_{i=1}^d\cfrac{x_i}{a_i}\le\cfrac{\beta+l}{t}\right\}.
\end{aligned}
\]
Denote the slice in the second line by
\[
 \Delta_n^{(l)}(a):=
 \left\{(x_1,\cdots,x_d,R-l)\in\mathbb Z_{>0}^n:
 \sum_{i=1}^d\cfrac{x_i}{a_i}\le\cfrac{\beta+l}{t}\right\}.
\]

For $l\ne0$, the defining condition is equivalent to
\[
 \sum_{i=1}^d\cfrac{t}{\beta+l}\cfrac{x_i}{a_i}\le1.
\]
Projection onto the first $d$ coordinates therefore gives
\[
 \#\Delta_n^{(l)}(a)
 =\#\Delta_d\!\left(\cfrac{\beta+l}{t}a'\right)
 =P_d\!\left(\cfrac{\beta+l}{t}a'\right).
\]
Since $a_i\ge a_n=t$, we have
\[
 \cfrac{\beta+l}{t}a_i\ge\beta+l\ge l.
\]
For $l>1$, the induction hypothesis consequently yields
\[
\begin{aligned}
 d!\,\#\Delta_n^{(l)}(a)
   &=d!P_d\!\left(\cfrac{\beta+l}{t}a'\right)\\
   &\le\prod_{i=1}^d
        \left(\cfrac{\beta+l}{t}a_i-1\right)-p_d(l).
\end{aligned}
\]
For $l=1$, Theorem~\ref{thm:rough-gly} gives
\[
\begin{aligned}
 d!\,\#\Delta_n^{(1)}(a)
   &=d!P_d\!\left(\cfrac{\beta+1}{t}a'\right)\\
   &\le\prod_{i=1}^d
        \left(\cfrac{\beta+1}{t}a_i-1\right).
\end{aligned}
\]
This agrees with the preceding expression because $p_d(1)=0$.

For $l=0$, we have
\[
 \Delta_n^{(0)}(a)=
 \left\{(x_1,\cdots,x_d,R)\in\mathbb Z_{>0}^n:
 \sum_{i=1}^d\cfrac{x_i}{a_i}\le\cfrac\beta t\right\}.
\]
If $\beta=0$, this slice is empty, so $\#\Delta_n^{(0)}(a)=0$.
If $\beta>0$ but the slice is empty, its cardinality is again zero.
Suppose now that $\beta>0$ and $\Delta_n^{(0)}(a)\ne\varnothing$.
Then
\[
 \#\Delta_n^{(0)}(a)
 =\#\Delta_d\!\left(\cfrac\beta t a'\right)
 =P_d\!\left(\cfrac\beta t a'\right).
\]
Choose $(x_1,\cdots,x_d,R)\in\Delta_n^{(0)}(a)$. For every $i$,
\[
 0<\cfrac{t}{\beta}\cfrac{x_i}{a_i}
 \le\sum_{j=1}^d\cfrac{t}{\beta}\cfrac{x_j}{a_j}\le1,
\]
and hence
\[
 \cfrac{\beta a_i}{t}
 \ge\cfrac{\beta a_i}{t x_i}\ge1.
\]
Theorem~\ref{thm:rough-gly} therefore implies
\[
\begin{aligned}
 d!\,\#\Delta_n^{(0)}(a)
   &=d!P_d\!\left(\cfrac\beta t a'\right)\\
   &\le\prod_{i=1}^d\left(\cfrac\beta t a_i-1\right)\\
   &=\prod_{i=1}^d
        \left(\cfrac{\beta(t+\delta_i)}t-1\right)\\
   &\le\left(\cfrac\beta t\right)^d\prod_{i=1}^d\delta_i.
\end{aligned}
\]
The final bound also applies when $\beta=0$, and when $\beta>0$ but
the slice is empty, since its right-hand side is nonnegative.

Adding the slice inequalities gives
\[
 n!P_n(a)=n!\,\#\Delta_n(a)\le U,
\]
where
\[
\begin{aligned}
 U&:=n\sum_{l=1}^{R-1}
     \left[\prod_{i=1}^d
       \left(\cfrac{\beta+l}{t}a_i-1\right)-p_d(l)\right]
     +n\left(\cfrac\beta t\right)^d\prod_{i=1}^d\delta_i\\
  &=n\sum_{l=1}^{R-1}
     \left[\prod_{i=1}^d
       \left(\beta+l-1+\cfrac{\beta+l}{t}\delta_i\right)-p_d(l)\right]
     +n\left(\cfrac\beta t\right)^d\prod_{i=1}^d\delta_i\\
  &=n\sum_{l=1}^{R-1}
     \left[\prod_{i=1}^d
       \left(s_l-1+\cfrac{s_l}{t}\delta_i\right)-p_d(l)\right]
     +n\left(\cfrac\beta t\right)^d\prod_{i=1}^d\delta_i,
\end{aligned}
\]
and $s_l:=\beta+l$. We next prove that
\[
 U\le\prod_{i=1}^n(a_i-1)-p_n(R).
\]

Let
\[
 e_j(\delta):=
 \sum_{1\le i_1<\cdots<i_j\le d}
 \delta_{i_1}\cdots\delta_{i_j}
\]
be the $j$-th elementary symmetric polynomial in
$\delta_1,\cdots,\delta_d$. Since
\[
 (l+1)_n-(l)_n=n(l)_{n-1},
\]
telescoping gives
\[
\begin{aligned}
 n\sum_{l=1}^{R-1}(l)_d
   &=\sum_{l=1}^{R-1}\bigl[(l+1)_n-(l)_n\bigr]\\
   &=(R)_n-(1)_n=(R)_n.
\end{aligned}
\]
We expand the difference in the form
\[
 \prod_{i=1}^n(a_i-1)-U-p_n(R)
 =E+\sum_{j=1}^dD_j e_j(\delta).
\]
With the convention $e_0(\delta)=1$, the product expansion is
\begin{equation}\label{eq:general-product-expansion}
 \prod_{i=1}^d(A+B\delta_i)
 =\sum_{j=0}^d A^{d-j}B^j e_j(\delta).
\end{equation}
Thus
\[
\begin{aligned}
 \prod_{i=1}^n(a_i-1)
   &=(t-1)\prod_{i=1}^d(t-1+\delta_i)\\
   &=(t-1)\sum_{j=0}^d(t-1)^{d-j}e_j(\delta)\\
   &=\sum_{j=0}^d(t-1)^{n-j}e_j(\delta),
\end{aligned}
\]
and
\begin{equation}\label{eq:slice-product-expansion}
 \prod_{i=1}^d\left(s_l-1+\cfrac{s_l}{t}\delta_i\right)
 =\sum_{j=0}^d(s_l-1)^{d-j}
   \left(\cfrac{s_l}{t}\right)^j e_j(\delta).
\end{equation}
The factor \((s_l-1)^{d-j}\) arises from the product expansion;
when \(j=d\), it represents an empty product and equals \(1\),
even if \(s_l=1\).
Consequently,
\[
\begin{aligned}
 U={}&n\sum_{l=1}^{R-1}\sum_{j=0}^d
 (s_l-1)^{d-j}\left(\cfrac{s_l}{t}\right)^j e_j(\delta)\\
 &-n\sum_{l=1}^{R-1}p_d(l)
   +n\left(\cfrac\beta t\right)^d e_d(\delta).
\end{aligned}
\]
The constant term of the difference is
\[
\begin{aligned}
 E={}&(t-1)^n
      -\left[n\sum_{l=1}^{R-1}(s_l-1)^d
              -n\sum_{l=1}^{R-1}p_d(l)\right]-p_n(R)\\
  ={}&(t-1)^n-n\sum_{l=1}^{R-1}(s_l-1)^d+n\sum_{l=1}^{R-1}\bigl[(l-1)^d-(l)_d\bigr]
       -(R-1)^n+(R)_n\\
  ={}&(t-1)^n-(R-1)^n-n\sum_{l=1}^{R-1}\bigl[(l+\beta-1)^d-(l-1)^d\bigr]
       -n\sum_{l=1}^{R-1}(l)_d+(R)_n\\
  ={}&(t-1)^n-(R-1)^n
       -n\sum_{l=1}^{R-1}\bigl[(l+\beta-1)^d-(l-1)^d\bigr].
\end{aligned}
\]
For $1\le j\le d$, the coefficient of $e_j(\delta)$ is
\[
\begin{aligned}
 D_j={}&(t-1)^{n-j}
 -n\sum_{l=1}^{R-1}(s_l-1)^{d-j}\left(\cfrac{s_l}{t}\right)^j-\mathbf{1}_{\{j=d\}}\,n\left(\cfrac\beta t\right)^d,
\end{aligned}
\]
where $\mathbf{1}_{\{j=d\}}$ is the indicator of the condition $j=d$.

First, we claim that $D_j>0$ for every $1\le j\le d$.
For $1\le l\le R-1$, we have $s_l=l+\beta<R+\beta=t$, and therefore
\[
 s_l-1<\cfrac{(t-1)s_l}{t}.
\]
It follows that
\[
\begin{aligned}
 D_j\ge{}&(t-1)^{n-j}
     -n\sum_{l=1}^{R-1}(t-1)^{d-j}\left(\cfrac{s_l}{t}\right)^d
     -n\left(\cfrac\beta t\right)^d\\
 \ge{}&(t-1)^{d-j}
 \left\{(t-1)-n\left[
     \sum_{l=1}^{R-1}\left(\cfrac{s_l}{t}\right)^d
     +\left(\cfrac\beta t\right)^d\right]\right\}.
\end{aligned}
\]
The second inequality uses $t\ge2$, hence $(t-1)^{d-j}\ge1$.
Applying Lemma~\ref{lem:power-sum} with $h=d=n-1$ gives
\[
 n\sum_{k\ge1}\bigl((t-k)_+\bigr)^d<(t-1)t^d.
\]
Since $t=R+\beta$ with $R\in\mathbb Z$ and $0\le\beta<1$, we have
\[
 \sum_{k\ge1}\bigl((t-k)_+\bigr)^d
   =\sum_{k=1}^R(R+\beta-k)^d=\sum_{l=0}^{R-1}(\beta+l)^d
    =\sum_{l=0}^{R-1}s_l^d.
\]
Thus
\[
 n\left(\beta^d+\sum_{l=1}^{R-1}s_l^d\right)<(t-1)t^d,
\]
or, equivalently,
\[
 n\left[\sum_{l=1}^{R-1}\left(\cfrac{s_l}{t}\right)^d
       +\left(\cfrac\beta t\right)^d\right]<t-1.
\]
This proves that every $D_j$ is positive.

Second, we claim that $E\ge\beta^n\ge0$. With $R$ fixed, regard $E$
as a function of $\beta\in[0,1)$:
\[
\begin{aligned}
 E(\beta)={}&(R+\beta-1)^n-(R-1)^n-n\sum_{l=1}^{R-1}
   \bigl[(l+\beta-1)^d-(l-1)^d\bigr].
\end{aligned}
\]
Differentiation yields
\[
\begin{aligned}
 \cfrac{dE}{d\beta}
   &=n\left[(R+\beta-1)^{n-1}
       -(n-1)\sum_{l=1}^{R-1}(l+\beta-1)^{n-2}\right]\\
   &=n\left[(R+\beta-1)^{n-1}
       -(n-1)\sum_{j=0}^{R-2}(j+\beta)^{n-2}\right].
\end{aligned}
\]
For $n\ge4$, the function $x\mapsto(n-1)x^{n-2}$ is increasing on
$[0,\infty)$. Hence
\[
\begin{aligned}
 (n-1)\sum_{j=0}^{R-2}(j+\beta)^{n-2}
   &\le\int_\beta^{R-1+\beta}(n-1)x^{n-2}\,dx\\
   &=(R-1+\beta)^{n-1}-\beta^{n-1}.
\end{aligned}
\]
It follows that
\[
 \cfrac{dE}{d\beta}\ge n\beta^{n-1}.
\]
Since $E(0)=0$, integration from $0$ to $\beta$ gives
\[
 E(\beta)\ge\beta^n\ge0.
\]

We have therefore proved
\[
 \prod_{i=1}^n(a_i-1)-p_n(R)-U\ge0,
\]
and consequently
\[
 n!P_n(a)\le U\le\prod_{i=1}^n(a_i-1)-p_n(R).
\]
Because
\[
 a_i\ge a_n\ge\lfloor a_n\rfloor=R\ge r,
\]
Lemma~\ref{lem:pn-monotone} gives $p_n(R)\ge p_n(r)$. Thus
\[
 n!P_n(a)\le\prod_{i=1}^n(a_i-1)-p_n(r).
\]

We now establish the equality condition. If $a_1=\cdots=a_n=r$, then
\[
 \Delta_n(a)=\left\{
 (x_1,\cdots,x_n)\in\mathbb Z_{>0}^n:
 x_1+\cdots+x_n\le r\right\}.
\]
Hence $P_n(a)=\dbinom rn$, and equality does hold.
Conversely, suppose that
\[
 n!P_n(a)=\prod_{i=1}^n(a_i-1)-p_n(r).
\]
All inequalities in the preceding chain must then be equalities, so
\[
 n!P_n(a)=U
   =\prod_{i=1}^n(a_i-1)-p_n(R)=\prod_{i=1}^n(a_i-1)-p_n(r).
\]
Lemma~\ref{lem:pn-monotone} implies $R=r$, and
\[
 \prod_{i=1}^n(a_i-1)-p_n(R)-U
 =E+\sum_{j=1}^{n-1}D_j e_j(\delta)=0.
\]
All terms on the right are nonnegative. Since $E\ge\beta^n$, we obtain
$\beta=0$ and $a_n=t=R$. Moreover,
\[
 D_1>0,\qquad e_1(\delta)=\delta_1+\cdots+\delta_{n-1},
 \qquad \delta_i\ge0,
\]
so $\delta_1=\cdots=\delta_{n-1}=0$. Therefore
$a_1=\cdots=a_n=R=r$, completing the induction.
\end{proof}

\begin{corollary}\label{cor:integer-minimum}
Let $n\ge3$. If the smallest inverse weight $a_n$ of a weighted homogeneous
isolated hypersurface singularity is an integer, then the Yau Geometric
Conjecture holds, including its equality characterization by right
equivalence to a homogeneous polynomial.
\end{corollary}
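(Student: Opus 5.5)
By Corollary~\ref{cor:axis} we have $v=\lceil a_n\rceil$. If $a_n$ is an integer, this gives $v=a_n$. Since $a_n>1$, also $v\ge2$. All intercepts satisfy $a_i\ge a_n=v$. We can therefore apply Theorem~\ref{thm:integer-reference} with $r=v$. Combined with the formulas \eqref{eq:pg-lattice} and \eqref{eq:milnor-product}, it gives
\[
 n!p_g=n!P_n(a)\le\prod_{i=1}^n(a_i-1)-p_n(v)=\mu-p_n(v),
\]
which is exactly \eqref{eq:ygc}.

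For the equality characterization, first suppose $f$ is right equivalent to a homogeneous polynomial $g$ of degree $m$. Right equivalence preserves $\mu$, $p_g$ and $v$, so the invariants of $f$ are those of $g$. For $g$, all inverse weights equal $m$ and $v=m$. Then \eqref{eq:milnor-product} gives $\mu=(m-1)^n$. The lattice count gives $p_g=\binom mn$, by the computation in the equality part of the proof of Theorem~\ref{thm:integer-reference}. Hence $\mu-p_n(v)=m!/(m-n)!=n!p_g$, interpreted as $0$ when $m<n$, and equality holds.

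Conversely, suppose equality holds in \eqref{eq:ygc}. Then equality holds in Theorem~\ref{thm:integer-reference} with $r=v$, so $a_1=\cdots=a_n=v$. By \eqref{eq:milnor-product}, $\mu=(v-1)^n$, which is the equality case of Theorem~\ref{thm:yau-zuo}. That theorem then says $f$ is right equivalent to a homogeneous polynomial. One could also see this directly: all weights are equal, so $f$ is already homogeneous of degree $v$.

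The only delicate point is that the equality case of Theorem~\ref{thm:integer-reference} must be invoked with $r=v$ exactly. This choice is available precisely because $a_n$ is an integer, so $a_n=\lceil a_n\rceil=v$ by Corollary~\ref{cor:axis}. No further obstacle is expected.
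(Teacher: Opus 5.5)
Your proposal is correct and follows the paper's proof essentially step for step: you obtain $a_n=v\ge2$ from Corollary~\ref{cor:axis}, apply Theorem~\ref{thm:integer-reference} with $r=v$, and handle the two directions of the equality case via invariance of $\mu$, $p_g$, $v$ under right equivalence and the forced equality $a_1=\cdots=a_n=v$. The only difference is in the forward direction: the paper uses just the direct observation you give as an aside (equal weights make every monomial of $f$ have degree $v$, so $f$ is itself homogeneous), which avoids the equality case of Theorem~\ref{thm:yau-zuo}.
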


\begin{proof}
We have $a_1\ge\cdots\ge a_n>1$. If $a_n\in\mathbb Z$,
Corollary~\ref{cor:axis} gives
\[
 a_n=\lceil a_n\rceil=v\ge2.
\]
Theorem~\ref{thm:integer-reference} therefore yields
\[
 n!P_n(a)\le\prod_{i=1}^n(a_i-1)-p_n(v).
\]
Using $p_g=P_n(a)$ and $\mu=\prod_{i=1}^n(a_i-1)$, as in
\eqref{eq:pg-lattice} and \eqref{eq:milnor-product}, proves the required
inequality. Equality occurs if and only if
$a_1=\cdots=a_n=v\in\mathbb Z$.

We verify both directions of the coordinate-independent equality
statement. First, suppose that $f$ is right equivalent to a
homogeneous polynomial $g$, say
\[
 g(z)=\sum_\alpha b_\alpha z^\alpha,
 \qquad \alpha_1+\cdots+\alpha_n=m
 \quad\text{whenever }b_\alpha\ne0.
\]
Put $V_g=\{g=0\}$. Right equivalence preserves the isolated singularity condition and the invariants $\mu$, $p_g$, and $v$.
The polynomial $g$ has inverse normalized weights $(m,\ldots,m)$, and
\[
 n!p_g(V_g,0)
   =n!P_n(m,\cdots,m)=n!\dbinom mn=(m-1)^n-p_n(m).
\]
Also $v(g)=m$ and $\mu(g)=(m-1)^n$. Thus equality holds for $g$, and
the invariance of these three quantities implies equality for $f$.

Conversely, suppose that equality holds for $f$, and write
\[
 f(z)=\sum_\alpha c_\alpha z^\alpha.
\]
We then have $a_1=\cdots=a_n=v$. For every
$\alpha\in\mathbb Z_{\ge0}^n$ with $c_\alpha\ne0$, weighted
homogeneity gives
\[
 \sum_{i=1}^n\cfrac{\alpha_i}{v}
 =\cfrac1v\sum_{i=1}^n\alpha_i=1.
\]
Hence $\sum_{i=1}^n\alpha_i=v$ for every monomial occurring in $f$.
Thus $f$ itself is a homogeneous polynomial, which proves the
other direction.
\end{proof}

\section{Proof of the Yau Geometric Conjecture}\label{sec:ygc-proof}

\begin{theorem}\label{thm:high-dimensional}
Let $n\ge 7$, and let
$f:(\mathbb C^n,0)\longrightarrow(\mathbb C,0)$ be a weighted homogeneous
polynomial defining an isolated hypersurface singularity at the origin.
Retain the weights $a_1\ge a_2\ge\cdots\ge a_n>1$ introduced above, and put
\[
v=\lceil a_n\rceil,\qquad
\mu=\prod_{i=1}^n(a_i-1),\qquad p_g=P_n(a).
\]
Then
\[
n!p_g\le \mu-p_n(v).
\]
Equality holds if and only if $f$ is right equivalent to a
homogeneous polynomial.
\end{theorem}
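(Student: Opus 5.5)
We split according to whether $a_n$ is an integer. If $a_n\in\mathbb Z$, Corollary~\ref{cor:integer-minimum} already gives the inequality together with its equality characterization. So assume from now on that $a_n\notin\mathbb Z$, and put $v=\lceil a_n\rceil$ and $u=v-1$. In this case we aim for a \emph{strict} inequality $n!P_n(a)<\mu-p_n(v)$. This is consistent with the equality statement: if $f$ were right equivalent to a homogeneous polynomial, Theorem~\ref{thm:yau-zuo} would force $\mu=(v-1)^n$, and the equality analysis below excludes that.

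Let $I=\{i: u<a_i<v\}$ be the set of small indices, with $m=|I|\ge1$.

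\emph{Step 1: pairing.} For each $i\in I$, apply Lemma~\ref{lem:saito} with $J=\{i\}$, or rather with $J=I'$ for subsets $I'\subset I$ of small variables. A monomial $z_i^s$ cannot occur, since $a_i\notin\mathbb Z$. Hence, exactly as in Corollary~\ref{cor:axis}, we obtain monomials $z_i^{u}z_{\sigma(i)}$ with
\[
u/a_i+1/a_{\sigma(i)}=1,\qquad\text{i.e.}\qquad a_{\sigma(i)}=\frac{a_i}{a_i-u}>v.
\]
Applying Saito's lemma to all subsets of $I$ (a Hall-marriage argument) should let us choose $\sigma$ injective with image disjoint from $I$.

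\emph{Step 2: lower bound for $\mu$.} Each pair then contributes
\[
(b-1)(c-1)=(b-1)\frac{u}{b-u},\qquad b=a_i,\ c=a_{\sigma(i)}.
\]
On $(u,v)$ this quantity decreases towards $(v-1)^2=u^2$ as $b\to v$, and it exceeds $u^2$ for every $b$ in that interval. The remaining factors are at least $v-1$. This gives
\[
\mu\ge\prod_{i\in I}(a_i-1)(a_{\sigma(i)}-1)\cdot\prod_{\text{rest}}(a_k-1),
\]
which is a product of $m$ pair factors and $n-2m$ free factors.

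\emph{Step 3: upper bound for $P_n(a)$.} Replace every $a_i$ with $i\in I$ by $v$; this only enlarges the simplex, so $P_n(a)\le P_n(a^*)$. Now fix the coordinates $x_i$, $i\in I$, all at once, and count the slices. Each slice is a lattice-point count in dimension $n-m$ with intercepts $\ge v$ scaled by $1-\sum_{i\in I} x_i/v$. We bound it by Theorem~\ref{thm:integer-reference} when the scaled smallest intercept is at least $2$, and by Theorem~\ref{thm:rough-gly} otherwise.

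\emph{Step 4: comparison.} Summing the slice bounds gives a polynomial in the partner deviations $c-v$ and the other deviations, as in the proof of Theorem~\ref{thm:integer-reference}. We expand both sides in elementary symmetric functions and compare coefficients. The constant term should reduce, via telescoping of falling factorials, to the homogeneous identity at $a\equiv v$, with a strict loss coming from the pair factors exceeding $u^2$. The lower-degree coefficient comparisons should be dominated by the highest-degree one. The lower bound of Step 2 grows linearly in $c$ in each pair, while Step 3 gives a term of order $(c/v)\cdot(\text{slice volume})$.

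I expect the main obstacle to be the highest-degree coefficient. It requires showing that $n!$ times the number of lattice points, with all small coordinates fixed, weighted by $\prod(1-\sum x_i/v)^{n-m}$, stays below the corresponding product coefficient. My first attempt would bound the sum over $x\in\mathbb Z_{>0}^m$ by an integral over unit cubes. Convexity of the weight gives a midpoint inequality
\[
g(x)\le\int_{x+[-1/2,1/2]^m}g,
\]
which reduces the sum to a simplex volume. This volume carries a factor like $(n-m)!/n!$ times a shrinking power, and for $n\ge7$ the factorial margin should absorb the boundary losses. The low dimensions $3\le n\le6$ are exactly those where this margin fails, which is why they are excluded here and handled by the cited results.
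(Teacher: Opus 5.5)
\paragraph{The gap is the case $v=2$.} For $v\ge3$ your outline follows the paper's route: Saito pairing, the identity $(b-1)(c-1)=(u-1)c+1$, replacing the small intercepts by $v$, slicing by $\sum_{i\in I}x_i=v-r$, reducing to the top coefficient, and the midpoint inequality on unit cubes. The case $v=2$, i.e.\ $u=1$, is never separated, and there your argument breaks.

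The pair factor $(b-1)\cdot\frac{1}{b-1}$ equals $1=u^2$ identically. So your Step~2 claim that it exceeds $u^2$ is false, and the partner deviation $c-v$ enters $\mu$ with coefficient $u-1=0$. Two consequences follow.

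(i) The announced strict inequality for $a_n\notin\mathbb Z$ is false. The polynomial $f=z_1z_n+z_2^2+\cdots+z_{n-1}^2$ is weighted homogeneous for $a=(3,2,\ldots,2,\tfrac{3}{2})$. It has $\mu=1$, $p_g=0$ and $p_n(2)=1$, so equality holds, as it must since $f$ is homogeneous. Your appeal to Theorem~\ref{thm:yau-zuo} in the first paragraph is exactly where this slips.

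(ii) The relaxation to $a^*$ cannot prove even the non-strict inequality when $v=2$. Take $n=7$, $f=z_1z_7+z_2^{20}+\cdots+z_6^{20}$ and $a=(100,20,\ldots,20,\tfrac{100}{99})$. Then $p_g=0$ and $\mu-p_7(2)=19^5-1=2476098$. But $P^*=P_6(50,10,10,10,10,10)=1050$, so $7!P^*=5292000$ exceeds $\mu-p_7(2)$.

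\paragraph{The fix.} You need the paper's separate treatment of $v=2$. The pair relation $1/a_n+1/a_{\sigma(n)}=1$, together with $n\ge3$, makes $\Delta_n(a)$ empty, so $p_g=0$. Theorem~\ref{thm:yau-zuo} then gives $\mu\ge1=p_n(2)$, with equality exactly when $f$ is right equivalent to a homogeneous polynomial. Your coefficient comparison and strictness argument are valid only for $v\ge3$. There $\rho=(u-1)/u>0$, so the positive partner deviations $\delta_j$ actually contribute to $\mu$.

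\paragraph{A smaller point on Step~1.} If you apply Saito's lemma to an arbitrary subset $I'\subset I$, you get monomials $z_km_k$ whose $m_k$ may mix small variables of different weights. Such a monomial gives no edge $a_k=a_i/(a_i-u)$ for a Hall argument. Apply the lemma instead to each equal-weight class $I_q$. Then $\deg m_k=u$ forces $q_k=1-uq$, and the partner sets for distinct $q$ are automatically disjoint, so no matching theorem is needed.
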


\begin{proof}
Corollary~\ref{cor:axis} gives
$\operatorname{ord}_0f=\lceil a_n\rceil=v$. Set $u:=v-1$.
Recall the notation
\[
\Delta_n(a)=\left\{
\boldsymbol{x}=(x_1,\cdots,x_n)\in\mathbb Z_{>0}^n
:\sum_{i=1}^n\cfrac{x_i}{a_i}\le 1
\right\},\qquad P_n(a)=\#\Delta_n(a),
\]
and $p_n(v)=u^n-(v)_n$, where $(v)_n=n!\dbinom vn$.
Write
\[
f(z)=\sum_\alpha c_\alpha z^\alpha.
\]
Weighted homogeneity means that
\[
\sum_{i=1}^n\cfrac{\alpha_i}{a_i}=1
\qquad
\text{for every }\alpha\in\mathbb Z_{\ge 0}^n
\text{ with }c_\alpha\ne 0.
\]
It remains to prove
\[
\mu-n!P_n(a)\ge u^n-(v)_n
\]
and determine when equality holds.

\medskip
\noindent\textbf{(1) Pairing the small intercepts.}
Call an intercept $a_i$ \emph{small} if $a_i<v$, and let $m$ be the number
of small intercepts. If $m=0$, then $a_n=v=\lceil a_n\rceil$, so
Corollary~\ref{cor:integer-minimum} proves the assertion. If \(m=n\), then every intercept would be
smaller than $v$, and any monomial $z^\alpha$ of $f$
with nonzero coefficient would satisfy
\[
1=\sum_{i=1}^n\cfrac{\alpha_i}{a_i}
>\cfrac1v\sum_{i=1}^n\alpha_i
\ge1,
\]
a contradiction.
Henceforth suppose
that $n>m>0$.

The ordered intercepts satisfy
\[
a_1\ge\cdots\ge a_{n-m}\ge v
>a_{n-m+1}\ge\cdots\ge a_n>u=v-1\ge 1.
\]
We claim that there is an injection
\[
\sigma:\{n-m+1,\cdots,n\}\longrightarrow\{1,\cdots,n-m\}
\]
such that
\[
a_{\sigma(i)}=\cfrac{a_i}{a_i-u}>v,
\qquad n-m+1\le i\le n.
\]
We now prove this claim.

Set $q_i=1/a_i$. Then
\[
0<q_1\le\cdots\le q_{n-m}\le\cfrac1v
<q_{n-m+1}\le\cdots\le q_n<\cfrac1u\le 1.
\]
For $q\in\{q_{n-m+1},\cdots,q_n\}$, define
\[
I_q=\{j\in\{n-m+1,\cdots,n\}\mid q_j=q\}.
\]

By Lemma~\ref{lem:saito}, applied with $J=I_q$, at least one of the following holds:
\begin{enumerate}
 \item  $f$ contains a monomial with nonzero
 coefficient involving only the variables $z_j$ with $j\in I_q$.

 \item There exists a subset
 \[
  K_q\subset\{1,\cdots,n\}\setminus I_q,
  \qquad |K_q|=|I_q|,
 \]
 such that, for every $k\in K_q$, $f$ contains
 a monomial $z_km_k$ with nonzero coefficient, where $m_k$
 is a monomial involving only the variables $z_j$ with $j\in I_q$.
\end{enumerate}

The first alternative is impossible. Indeed, if \(z^\alpha\) were such a monomial, then  weighted homogeneity would give
\[
q\sum \limits_{i=1}^{n}\alpha _{i}=1.
\]
Since $q>1/v$, we obtain
\[
\deg z^\alpha=\sum \limits_{i=1}^{n}\alpha _{i}=\cfrac{1}{q} <v=\operatorname{ord}_{0}f,
\]
a contradiction.

Therefore the second alternative must hold.
 For every $k\in K_q$, weighted homogeneity gives
 \[
 q_{k}+ q\deg m_{k}=1.
 \]
 Thus,
 \[
 \deg m_{k}=\cfrac{1-q_{k}}{q}\le \cfrac{1}{q}<v,\qquad \text{and}\qquad \deg (z_k m_k)=1+\deg m_k\ge v.
 \]
Since $\deg m_k$ is an integer, we obtain
\[
\deg  m_{k}=v-1=u.
\]
Therefore,
\[
q_{k}=1-qu< 1-\cfrac{u}{v}=\cfrac{1}{v},\qquad \text{hence}\qquad k\in \{1,\cdots, n-m\}.
\]
Set \(J_q=\{j\in \{1,\,\cdots,\,n-m\}\mid q_{j}=1-qu \}\). Therefore,
\[
K_{q}\subset \{j\in \{1,\,\cdots,\,n-m\}\mid q_{j}=1-qu \}\subset \{1,\,\cdots,\,n-m\},\qquad \text{and}\qquad a_k>v.
\]
Thus we obtain
\[
|J_q|\ge|I_q|=|K_q|.
\]
We may therefore choose an injection
$\sigma_q:I_q\longrightarrow J_q$. Since $u\ge1$, the definition of $J_q$
implies that
\[
J_q\cap J_{q'}=\varnothing
\qquad
\bigl(q,q'\in\{q_{n-m+1},\cdots,q_n\},\ q\ne q'\bigr).
\]
Consequently,
\[
\sigma=\bigcup_{q\in\{q_{n-m+1},\cdots,q_n\}}\sigma_q:
\{n-m+1,\cdots,n\}\longrightarrow\{1,\cdots,n-m\}
\]
is an injection. For every $j\in\{n-m+1,\cdots,n\}$, it satisfies
\[
\cfrac1{a_{\sigma(j)}}
=q_{\sigma(j)}=1-uq_j=1-\cfrac u{a_j}.
\]
Thus
\[
a_{\sigma(j)}=\cfrac{a_j}{a_j-u},
\qquad
a_{\sigma(j)}-v=\cfrac{u(v-a_j)}{a_j-u}>0.
\]
This proves the claim. The existence of the injection also gives
\[
2m\le n.
\]

\medskip
\noindent\textbf{(2) Cases that do not require coefficient estimates.}
Step~(1) has already settled the case $m=0$. We continue to assume $m>0$,
so that $a_n$ is a small intercept.

\smallskip
\noindent\textbf{(2.1) The case $v=2$.}
Here $u=v-1=1$, and the pairing gives
\[
a_{\sigma(n)}=\cfrac{a_n}{a_n-1},
\qquad \cfrac1{a_n}+\cfrac1{a_{\sigma(n)}}=1.
\]
For every $(x_1,\cdots,x_n)\in\mathbb Z_{>0}^n$,
\begin{align*}
\sum_{i=1}^n\cfrac{x_i}{a_i}
&\ge\cfrac1{a_n}+\cfrac1{a_{\sigma(n)}}
  +\sum_{\substack{1\le i\le n\\i\notin\{n,\sigma(n)\}}}\cfrac1{a_i}=1+\sum_{\substack{1\le i\le n\\i\notin\{n,\sigma(n)\}}}\cfrac1{a_i}>1,
\end{align*}
where the strict inequality uses $n\ge3$. Hence
\[
\Delta_n(a)=\varnothing,\qquad p_g=P_n(a)=0.
\]
Moreover,
\[
\mu-p_n(v)=\prod_{i=1}^n(a_i-1)-p_n(2)
=\prod_{i=1}^n(a_i-1)-1.
\]
Theorem~\ref{thm:yau-zuo} and the Milnor--Orlik formula
\eqref{eq:milnor-product} now give
\[
\mu-p_n(v)=\mu-1\ge(v-1)^n-1\ge0.
\]
Equality holds if and only if $f$ is right equivalent to a
homogeneous polynomial.

\smallskip
\noindent\textbf{(2.2) A larger lattice-point set when $v\ge3$.}
We have
\[
a_1\ge\cdots\ge a_{n-m}\ge v
>a_{n-m+1}\ge\cdots\ge a_n>u\ge1.
\]
Define
\[
a^*=(a_1,\cdots,a_{n-m},v,\cdots,v)\in\mathbb R^n,
\qquad P^*=P_n(a^*).
\]
Every $(x_1,\cdots,x_n)\in\Delta_n(a)$ satisfies
\[
\sum_{i=1}^{n-m}\cfrac{x_i}{a_i}
+\sum_{i=n-m+1}^n\cfrac{x_i}v
\le\sum_{i=1}^n\cfrac{x_i}{a_i}\le1.
\]
It follows that
\begin{equation}\label{eq:lattice-monotonicity}
\Delta_n(a)\subset\Delta_n(a^*),
\qquad p_g=P_n(a)\le P^*.
\end{equation}

\smallskip
\noindent\textbf{(2.3) The case $v\ge3$ and $m\ge v$.}
We claim that $\Delta_n(a^*)=\varnothing$. Indeed, for every
$(x_1,\cdots,x_n)\in\mathbb Z_{>0}^n$,
\[
\sum_{i=1}^{n-m}\cfrac{x_i}{a_i}
+\sum_{i=n-m+1}^n\cfrac{x_i}v
\ge\sum_{i=1}^{n-m}\cfrac{x_i}{a_i}+\cfrac mv
>\cfrac mv\ge1.
\]
Here $2m\le n$ implies $n-m\ge m>0$, which justifies the strict
inequality. Thus $\Delta_n(a^*)=\varnothing$, and $p_g=0$.
In this case,
\[
v\le m\le\cfrac n2<n,
\]
so $(v)_n=0$ and
\[
p_n(v)=(v-1)^n-(v)_n=u^n.
\]
Theorem~\ref{thm:yau-zuo} gives
\[
\mu-p_n(v)=\mu-u^n\ge0,
\]
with equality if and only if $f$ is right equivalent to a
homogeneous polynomial.

We are therefore left with the range
\[
m>0,\qquad v\ge3,\qquad 1\le m<v,\qquad
n-m\ge\left\lceil\cfrac n2\right\rceil\ge4.
\]

\medskip
\noindent\textbf{(3) A lower bound for the Milnor number.}
Define
\[
\delta_j:=a_j-v\ge0,\qquad 1\le j\le n-m,
\]
and put $K=\sigma(\{n-m+1,\cdots,n\})$.
By Step~(1), every $j\in K$ satisfies $a_j>v$ and $\delta_j>0$.
The injectivity of $\sigma$ also gives $|K|=m$.

For $i\in\{n-m+1,\cdots,n\}$, the pairing identity
$a_{\sigma(i)}=a_i/(a_i-u)$ gives $(a_i-u)a_{\sigma(i)}=a_i$.
Consequently,
\begin{align*}
(a_i-1)(a_{\sigma(i)}-1)
&=a_i a_{\sigma(i)}-a_i-a_{\sigma(i)}+1\\
&=a_i+u a_{\sigma(i)}-a_i-a_{\sigma(i)}+1\\
&=(u-1)a_{\sigma(i)}+1\\
&=(u-1)(u+1+\delta_{\sigma(i)})+1\\
&=u^2+(u-1)\delta_{\sigma(i)}.
\end{align*}
It follows that
\begin{equation}\label{eq:paired-milnor-product}
\begin{aligned}
\mu
&=\prod_{i=n-m+1}^n(a_i-1)(a_{\sigma(i)}-1)
  \prod_{\substack{1\le j\le n-m\\j\notin K}}(a_j-1)\\
&=\prod_{i=n-m+1}^n\bigl[u^2+(u-1)\delta_{\sigma(i)}\bigr]
  \prod_{\substack{1\le j\le n-m\\j\notin K}}(\delta_j+u)\\
&=u^{2m}u^{n-2m}
  \prod_{i=n-m+1}^n\left(1+\cfrac{u-1}{u^2}\delta_{\sigma(i)}\right)
  \prod_{\substack{1\le j\le n-m\\j\notin K}}
       \left(1+\cfrac1u\delta_j\right)\\
&=u^n\prod_{\substack{1\le j\le n-m\\j\in K}}
       \left(1+\cfrac{u-1}{u^2}\delta_j\right)
  \prod_{\substack{1\le j\le n-m\\j\notin K}}
       \left(1+\cfrac1u\delta_j\right).
\end{aligned}
\end{equation}
We expand this product in the variables $\delta_j$. For
$J\subset\{1,\cdots,n-m\}$, set
\begin{equation}\label{eq:delta-product}
\delta_J=\prod_{j\in J}\delta_j,\qquad
\delta_\varnothing=1,\qquad \ell_J=|J\cap K|,
\end{equation}
and put $\rho=(u-1)/u$. Then
\begin{align*}
\mu
&=u^n\sum_{J\subset\{1,\cdots,n-m\}}
       \left(\cfrac1u\right)^{|J|-\ell_J}
       \left(\cfrac{u-1}{u^2}\right)^{\ell_J}\delta_J\\
&=u^n+\sum_{\varnothing\ne J\subset\{1,\cdots,n-m\}}
       u^{n-|J|}\rho^{\ell_J}\delta_J.
\end{align*}
Since $v\ge3$, we have $u\ge2$ and $0<\rho<1$.
Also, $\ell_J=|J\cap K|\le\min\{|J|,m\}$. Therefore
\begin{equation}\label{eq:milnor-lower-bound}
\mu\ge u^n+
\sum_{\varnothing\ne J\subset\{1,\cdots,n-m\}}
u^{n-|J|}\rho^{\min\{|J|,m\}}\delta_J.
\end{equation}

\medskip
\noindent\textbf{(4) A polynomial upper bound for $P^*$ in the variables
$\delta_j$.}
We first compute $P^*$. By definition,
\[
\Delta_n(a^*)=\left\{
x\in\mathbb Z_{>0}^n:
\sum_{j=1}^{n-m}\cfrac{x_j}{a_j}
+\cfrac{x_{n-m+1}+\cdots+x_n}{v}\le1
\right\}.
\]
For $x\in\Delta_n(a^*)$, we have
\[
m\le x_{n-m+1}+\cdots+x_n<v.
\]
Since $1\le m<v$, for $1\le r\le v-m$ we may define
\[
\Delta_n^{[r]}(a^*)=
\{x\in\Delta_n(a^*)\mid x_{n-m+1}+\cdots+x_n=v-r\}.
\]
Then
\[
\Delta_n(a^*)=\bigsqcup_{1\le r\le v-m}\Delta_n^{[r]}(a^*).
\]
If $x\in\Delta_n^{[r]}(a^*)$, then
\[
\sum_{j=1}^{n-m}\cfrac{x_j}{a_j}
\le1-\cfrac{v-r}v=\cfrac rv,
\]
or, equivalently,
\[
\sum_{j=1}^{n-m}\cfrac vr\cfrac{x_j}{a_j}\le1.
\]
Thus
\[
\Delta_n^{[r]}(a^*)=
\left\{x\in\mathbb Z_{>0}^n:
(x_1,\cdots,x_{n-m})\in
\Delta_{n-m}\left(\cfrac rv a_1,\cdots,\cfrac rv a_{n-m}\right),\,\sum_{j=n-m+1}^n x_j=v-r
\right\}.
\]
There are $\dbinom{v-r-1}{m-1}$ choices for the last $m$ positive integer
coordinates. Consequently,
\[
\bigl|\Delta_n^{[r]}(a^*)\bigr|
=\dbinom{v-r-1}{m-1}
P_{n-m}\left(\cfrac rv a_1,\cdots,\cfrac rv a_{n-m}\right),
\]
and hence
\begin{align*}
P^*
&=\sum_{r=1}^{v-m}\bigl|\Delta_n^{[r]}(a^*)\bigr|\\
&=\sum_{r=1}^{v-m}\dbinom{v-r-1}{m-1}
P_{n-m}\left(\cfrac rv a_1,\cdots,\cfrac rv a_{n-m}\right)\\
&=\sum_{r=1}^{v-m}\dbinom{v-r-1}{m-1}
P_{n-m}\left(r+\cfrac rv\delta_1,\cdots,
             r+\cfrac rv\delta_{n-m}\right).
\end{align*}

Every remaining intercept satisfies
$r+(r/v)\delta_j\ge r$, where $1\le j\le n-m$.
We also have $r\ge1$ and $n-m\ge\lceil n/2\rceil\ge4$.
For $r\ge2$ apply Theorem~\ref{thm:integer-reference}; for $r=1$ apply
Theorem~\ref{thm:rough-gly}, noting that $p_{n-m}(1)=0$. In either case,
\begin{align*}
(n-m)!P_{n-m}\left(r+\cfrac rv\delta_1,\cdots,
                         r+\cfrac rv\delta_{n-m}\right)\le
\prod_{j=1}^{n-m}\left(r-1+\cfrac rv\delta_j\right)-p_{n-m}(r).
\end{align*}
It follows that
\[
n!P^*\le
\cfrac{n!}{(n-m)!}\sum_{r=1}^{v-m}\dbinom{v-r-1}{m-1}
\left[
\prod_{j=1}^{n-m}\left(r-1+\cfrac rv\delta_j\right)-p_{n-m}(r)
\right].
\]
We now expand the product, retaining the notation
$\delta_J$ for $J\subset\{1,\cdots,n-m\}$ from Step~(3):
\begin{equation}\label{eq:pstar-product-expansion}
\prod_{j=1}^{n-m}\left(r-1+\cfrac rv\delta_j\right)
=\sum_{J\subset\{1,\cdots,n-m\}}
(r-1)^{n-m-|J|}\left(\cfrac rv\right)^{|J|}\delta_J.
\end{equation}
The factor \((r-1)^{n-m-|J|}\) arises from the product expansion;
when \(|J|=n-m\), it represents an empty product and equals \(1\),
even if \(r=1\). Therefore the upper bound can be expanded as follows:
\begin{align*}
&\cfrac{n!}{(n-m)!}\sum_{r=1}^{v-m}\dbinom{v-r-1}{m-1}
\left[
\prod_{j=1}^{n-m}\left(r-1+\cfrac rv\delta_j\right)-p_{n-m}(r)
\right]\\
=&{}\cfrac{n!}{(n-m)!}\sum_{r=1}^{v-m}\dbinom{v-r-1}{m-1}
\Biggl[-p_{n-m}(r)
+\sum_{J\subset\{1,\cdots,n-m\}}
(r-1)^{n-m-|J|}\left(\cfrac rv\right)^{|J|}\delta_J\Biggr]\\
=&{}\cfrac{n!}{(n-m)!}\sum_{r=1}^{v-m}\dbinom{v-r-1}{m-1}
\bigl[(r-1)^{n-m}-p_{n-m}(r)\bigr]
+\sum_{\varnothing\ne J\subset\{1,\cdots,n-m\}}A_{|J|}\delta_J\\
=&{}\cfrac{n!}{(n-m)!}\sum_{r=1}^{v-m}\dbinom{v-r-1}{m-1}(r)_{n-m}
+\sum_{\varnothing\ne J\subset\{1,\cdots,n-m\}}A_{|J|}\delta_J,
\end{align*}
where, for every nonempty $J\subset\{1,\cdots,n-m\}$,
\begin{equation}\label{eq:A-definition}
A_{|J|}=\cfrac{n!}{(n-m)!}\sum_{r=1}^{v-m}\dbinom{v-r-1}{m-1}
(r-1)^{n-m-|J|}\left(\cfrac rv\right)^{|J|}.
\end{equation}

The constant term satisfies
\begin{align*}
\cfrac{n!}{(n-m)!}\sum_{r=1}^{v-m}\dbinom{v-r-1}{m-1}(r)_{n-m}
&=n!\sum_{r=1}^{v-m}\dbinom{v-r-1}{m-1}\dbinom r{n-m}=n!\dbinom vn=(v)_n.
\end{align*}
To verify the identity
\[
\sum_{r=1}^{v-m}\dbinom{v-r-1}{m-1}\dbinom r{n-m}=\dbinom vn,
\]
use the polynomial identities
\begin{align*}
\sum_{r=0}^{v-1}(1+x)^{v-r-1}(1+y)^r
&=\cfrac{(1+x)^v-(1+y)^v}{x-y}\\
&=\sum_{k=1}^v\dbinom vk\cfrac{x^k-y^k}{x-y}\\
&=\sum_{k=1}^v\dbinom vk\sum_{j=0}^{k-1}x^{k-1-j}y^j
\end{align*}
and
\[
\sum_{r=0}^{v-1}(1+x)^{v-r-1}(1+y)^r
=\sum_{r=0}^{v-1}\sum_{i=0}^{v-r-1}\sum_{j=0}^r
\dbinom{v-r-1}i\dbinom rj x^iy^j.
\]
Comparing the coefficients of $x^{m-1}y^{n-m}$ proves the assertion.
The terms with $r=0$ or $r>v-m$ contribute zero to this coefficient;
if $v<n$, both sides are zero as well. We have thus obtained
\begin{equation}\label{eq:pstar-upper-bound}
n!P^*\le(v)_n+
\sum_{\varnothing\ne J\subset\{1,\cdots,n-m\}}A_{|J|}\delta_J.
\end{equation}

\medskip
\noindent\textbf{(5) Reducing the inequality to the estimate
$A_{n-m}\le(v-2)^m$.}
Since $P_n(a)\le P^*$, it suffices to prove
$n!P^*\le\mu-p_n(v)$. By the preceding bounds, it is enough to show that
\begin{equation}\label{eq:combine-bounds}
(v)_n+
\sum_{\varnothing\ne J\subset\{1,\cdots,n-m\}}A_{|J|}\delta_J\le u^n-p_n(v)+
\sum_{\varnothing\ne J\subset\{1,\cdots,n-m\}}
u^{n-|J|}\rho^{\min\{|J|,m\}}\delta_J.
\end{equation}
Because $p_n(v)=u^n-(v)_n$, it suffices to prove the coefficient bounds
\begin{equation}\label{eq:coefficients-compare}
A_{|J|}\le u^{n-|J|}\rho^{\min\{|J|,m\}},
\qquad\varnothing\ne J\subset\{1,\cdots,n-m\}.
\end{equation}
Writing $k=|J|$, with $1\le k\le n-m$, and recalling that
$u=v-1\ge2$ and $\rho=(u-1)/u\in(0,1)$, we obtain that these bounds are equivalent to
\[
\cfrac{A_k}{u^{n-k}}\le\rho^{\min\{k,m\}},
\qquad 1\le k\le n-m.
\]
We first consider the monotonicity of the sequence
$\{A_k/u^{n-k}\}_{1\le k\le n-m}$.

We have
\[
\cfrac{A_k}{u^{n-k}}
=\cfrac{n!}{(n-m)!}\sum_{r=1}^{v-m}\dbinom{v-r-1}{m-1}
\cfrac{(r-1)^{n-m-k}(r/v)^k}{u^{n-k}},
\qquad 1\le k\le n-m.
\]
Put
\begin{equation}\label{eq:T-definition}
T_{r,k}=\cfrac{(r-1)^{n-m-k}(r/v)^k}{u^{n-k}},
\qquad 1\le k\le n-m,\quad1\le r\le v-m.
\end{equation}
The factor \((r-1)^{n-m-k}\) arises from the product expansion;
when \(k=n-m\), it represents an empty product and equals \(1\),
even if \(r=1\). Then $T_{1,k}=0$ for $k<n-m$, whereas
$T_{1,n-m}=v^{m-n}u^{-m}>0$. For $r>1$ and $1\le k<n-m$,
\begin{align*}
\cfrac{T_{r,k+1}}{T_{r,k}}
&=\cfrac{(r-1)^{n-m-k-1}(r/v)^{k+1}/u^{n-k-1}}
        {(r-1)^{n-m-k}(r/v)^k/u^{n-k}}\\
&=\cfrac1{r-1}\cdot\cfrac rv\cdot u\\
&=\cfrac{ur}{v(r-1)}.
\end{align*}
Since
\[
ur-v(r-1)=v-r\ge m\ge1>0,
\]
we obtain $T_{r,k+1}>T_{r,k}$ for $r>1$. Taking the $r=1$ terms into account, we see that
$T_{r,k}$ is nondecreasing in $k$ for every $r$. Hence
$A_k/u^{n-k}$ is nondecreasing in $k$.

Since $0<\rho<1$, the sequence $\rho^{\min\{k,m\}}$ is nonincreasing.
It therefore suffices to prove
\[
\cfrac{A_{n-m}}{u^m}\le\rho^{\min\{n-m,m\}}=\rho^m,
\]
where $2m\le n$ gives $\min\{n-m,m\}=m$. Equivalently, we need
\[
A_{n-m}\le u^m\left(\cfrac{u-1}u\right)^m=(v-2)^m,
\]
if we want to prove \eqref{eq:coefficients-compare}.

\medskip
\noindent\textbf{(6) The strict estimate $A_{n-m}<(v-2)^m$.}
By definition,
\[
A_{n-m}=\cfrac{n!}{(n-m)!}\sum_{r=1}^{v-m}
\dbinom{v-r-1}{m-1}\left(\cfrac rv\right)^{n-m}.
\]
For each fixed $1\le r\le v-m$, we know that
\[
\dbinom{v-r-1}{m-1}
=\#\{y\in\mathbb Z_{>0}^m\mid y_1+\cdots+y_m=v-r\}.
\]
Define
\[
F:\mathbb R^m\longrightarrow\mathbb R,
\qquad
F(y)=\left[\left(1-\cfrac{y_1+\cdots+y_m}v\right)_+\right]^{n-m}.
\]
For $y\in\mathbb Z_{>0}^m$, if $y_1+\cdots+y_m\ge v$, then $F(y)=0$.
If $m\le y_1+\cdots+y_m<v$, then
\[
F(y)=\left(1-\cfrac{y_1+\cdots+y_m}v\right)^{n-m}.
\]
Consequently,
\begin{align*}
\sum_{y\in\mathbb Z_{>0}^m}F(y)
&=\sum_{r=1}^{v-m}
\sum_{\substack{y\in\mathbb Z_{>0}^m\\y_1+\cdots+y_m=v-r}}F(y)\\
&=\sum_{r=1}^{v-m}\dbinom{v-r-1}{m-1}
\left(\cfrac rv\right)^{n-m},
\end{align*}
and therefore
\[
A_{n-m}=\cfrac{n!}{(n-m)!}\sum_{y\in\mathbb Z_{>0}^m}F(y).
\]
Only finitely many terms of this sum are nonzero.

We now bound $A_{n-m}$ by an integral. Since $n-m\ge4$, the function
$x\mapsto(x_+)^{n-m}$ is convex. Since the map
$A:\mathbb{R}^m\rightarrow \mathbb{R},\,y\mapsto1-(y_1+\cdots+y_m)/v$ satisfies
\[
A(\lambda y+(1-\lambda )z)=\lambda A(y)+(1-\lambda)A(z),\qquad 0 \le \lambda \le 1,\;y,\,z\in \mathbb{R}^{m},
\]we have
\begin{align*}
F(\lambda y+(1-\lambda)z)&=[A(\lambda y+(1-\lambda)z)_{+}]^{n-m}=[(\lambda A(y)+(1-\lambda)A(z))_{+}]^{n-m}\\ 
&\le \lambda [A(y)_{+}]^{n-m}+(1-\lambda)[A(z)_{+}]^{n-m}\\ 
&=\lambda F(y)+(1-\lambda)F(z).
\end{align*}Therefore \(F\) is convex.
For each $k=(k_1,\cdots,k_m)\in\mathbb Z_{>0}^m$, take the cube
\[
Q_k=\prod_{i=1}^m\left[k_i-\cfrac12,k_i+\cfrac12\right].
\]
It has volume $\operatorname{Vol}Q_k = 1$ and barycenter $k$, and from the convexity of \(F\) we have
\[
F(k)=F\left( \cfrac{y+(2k-y)}{2} \right)\le \cfrac{F(y)+F(2k-y)}{2}.
\]Integrating over \(Q_k\) gives
\[
F(k)=\int_{Q_{k}}F(k)\,dy\le \cfrac{1}{2}\int_{Q_{k}}F(y)\,dy+\cfrac{1}{2}\int_{Q_{k}}F(2k-y)\,dy.
\]
The change of variables $z=2k-y$ maps $Q_k$ onto itself,
and the absolute value of its Jacobian determinant is $1$.
Hence
\[
\int_{Q_k}F(2k-y)\,dy=\int_{Q_k}F(z)\,dz,
\]
and therefore
\[
F(k)\le \int_{Q_k}F(y)\,dy.
\]
The cubes \(\{Q_k\}_{k\in \mathbb{Z}_{>0}^m}\) cover $[1/2,\infty)^m$ and overlap only on boundaries of
measure zero. Summing yields
\begin{align*}
A_{n-m}
&=\cfrac{n!}{(n-m)!}\sum_{y\in\mathbb Z_{>0}^m}F(y)\\
&\le\cfrac{n!}{(n-m)!}\sum_{y\in\mathbb Z_{>0}^m}
\int_{Q_y}F(w)\,dw\\
&=\cfrac{n!}{(n-m)!}\int_{[1/2,\infty)^m}F(w)\,dw.
\end{align*}

To calculate the last integral, put $L=v-m/2>0$ and make the change of
variables $z_i=w_i-1/2$. Then
\begin{align*}
\int_{[1/2,\infty)^m}F(w)\,dw
&=\int_{\substack{z_i\ge0\\z_1+\cdots+z_m\le L}}
\left(\cfrac{L-z_1-\cdots-z_m}v\right)^{n-m}\,dz\\
&=v^{m-n}\int_{\substack{z_i\ge0\\z_1+\cdots+z_m\le L}}
(L-z_1-\cdots-z_m)^{n-m}\,dz\\
&=v^{m-n}L^n\cfrac{(n-m)!}{n!}\\
&=v^m\left(1-\cfrac m{2v}\right)^n\cfrac{(n-m)!}{n!}.
\end{align*}
Thus
\[
A_{n-m}\le v^m\left(1-\cfrac m{2v}\right)^n.
\]
Bernoulli's inequality gives
\[
1-\cfrac m{2v}\le\left(1-\cfrac1{2v}\right)^m.
\]
Since $1\le m<v$ and $n\ge7$, it follows that
\[
A_{n-m}^{1/m}
\le v\left(1-\cfrac1{2v}\right)^n
\le v\left(1-\cfrac1{2v}\right)^7.
\]

It remains to prove that
\[
v\left(1-\cfrac1{2v}\right)^7<v-2.
\]
The function $x\mapsto x^7$ is convex for $x>0$. Since $v\ge3$ and
\[
1-\cfrac1{2v}
=\left(1-\cfrac3v\right)\cdot1+\cfrac3v\cdot\cfrac56,
\]
Jensen's inequality gives
\begin{align*}
v\left(1-\cfrac1{2v}\right)^7
&\le v\left[\left(1-\cfrac3v\right)1^7
             +\cfrac3v\left(\cfrac56\right)^7\right]\\
&=v-3+3\left(\cfrac56\right)^7\\
&<v-2.
\end{align*}
The last inequality follows from
\[
3\left(\cfrac56\right)^7=\cfrac{234375}{279936}<1.
\]
We conclude that
\[
A_{n-m}
\le\left[v\left(1-\cfrac1{2v}\right)^7\right]^m
<(v-2)^m.
\]
This proves \eqref{eq:coefficients-compare}
and hence \eqref{eq:combine-bounds}.

\medskip
\noindent\textbf{(7) Completion of the proof.}
Combining the preceding estimates in the remaining range
$v\ge3$, $1\le m<v$, and $n-m\ge4$, we obtain
\begin{align*}
n!P_n(a)
&\le n!P^*\\
&\le(v)_n+
\sum_{\varnothing\ne J\subset\{1,\cdots,n-m\}}A_{|J|}\delta_J\\
&\le u^n-p_n(v)+
\sum_{\varnothing\ne J\subset\{1,\cdots,n-m\}}
u^{n-|J|}\rho^{\min\{|J|,m\}}\delta_J\\
&\le\mu-p_n(v).
\end{align*}
Here the first, second, and fourth inequalities follow from
\eqref{eq:lattice-monotonicity},
\eqref{eq:pstar-upper-bound}, and
\eqref{eq:milnor-lower-bound}, respectively.
The third inequality follows from \eqref{eq:combine-bounds}.
This proves the inequality.

We now prove the equality characterization in both directions.

\smallskip
\noindent$(\Leftarrow)$ Suppose that $f$ is right equivalent to \(g\), where $g$ is
a homogeneous polynomial. Put $V_f=\{f=0\}$ and $V_g=\{g=0\}$.
By the invariance of $\mu$, $p_g$, and $v$ under right equivalence,
the lattice-point formula~\eqref{eq:pg-lattice}, and the
Milnor--Orlik formula~\eqref{eq:milnor-product},
\begin{align*}
n!p_g(V_f,0)
&=n!p_g(V_g,0)\\
&=\mu(g)-p_n(v(g))\\
&=\mu(f)-p_n(v(f)).
\end{align*}
Thus equality holds.

\smallskip
\noindent$(\Rightarrow)$ If $m=0$, the equality characterization was
proved in Step~(1). If $m>0$ and $v=2$, it was proved in Step~(2.1).
If $m\ge v$, it was proved in Step~(2.3).

In the remaining case $v\ge3$ and $1\le m<v$, Step~(6) gives
$A_{n-m}<(v-2)^m$, and therefore
\[
\cfrac{A_k}{u^{n-k}}
\le\cfrac{A_{n-m}}{u^m}
<\rho^m\le\rho^{\min\{k,m\}},
\qquad1\le k\le n-m.
\]
Since $m>0$, the set $K$ is nonempty. Choose $j\in K$; then
$\delta_j=a_j-v>0$. Consequently,
\[
A_1\delta_{\{j\}}<u^{n-1}\rho\delta_{\{j\}}.
\]
All the other monomials $\delta_J$ are nonnegative, so
\begin{align*}
&(v)_n+
\sum_{\varnothing\ne J\subset\{1,\cdots,n-m\}}A_{|J|}\delta_J<u^n-p_n(v)+
\sum_{\varnothing\ne J\subset\{1,\cdots,n-m\}}
u^{n-|J|}\rho^{\min\{|J|,m\}}\delta_J.
\end{align*}
Hence in this case from 
\eqref{eq:lattice-monotonicity},
\eqref{eq:milnor-lower-bound}, and
\eqref{eq:pstar-upper-bound} we have
\begin{align*}
n!P_n(a)
&\le n!P^*\\
&\le(v)_n+
\sum_{\varnothing\ne J\subset\{1,\cdots,n-m\}}A_{|J|}\delta_J\\
&< u^n-p_n(v)+
\sum_{\varnothing\ne J\subset\{1,\cdots,n-m\}}
u^{n-|J|}\rho^{\min\{|J|,m\}}\delta_J\\
&\le\mu-p_n(v).
\end{align*}
Thus equality is impossible. This completes the proof.
\end{proof}

\begin{corollary}[Theorem~\ref{thm:main-introduction}, restated]\label{cor:ygc-all}
The Yau Geometric Conjecture holds. More precisely, let $n\ge3$, and let
$f:(\mathbb C^n,0)\longrightarrow(\mathbb C,0)$ be a weighted homogeneous
polynomial defining an isolated hypersurface singularity at the origin.
Let $\mu$, $p_g$, and $v$ denote, respectively, the Milnor number, geometric
genus, and multiplicity of $V=\{z\mid f(z)=0\}$. Then
\[
\mu-p_n(v)\ge n!p_g,
\]
where
\[
p_n(v)=(v-1)^n-v(v-1)\cdots(v-n+1)
=(v-1)^n-n!\dbinom vn,
\]
with the convention $\dbinom vn=0$ for $v<n$.
Equality holds if and only if $f$ is right equivalent to a
homogeneous polynomial.
\end{corollary}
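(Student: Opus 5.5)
The corollary should follow by splitting on the ambient dimension $n$ and citing, in each range, either Theorem~\ref{thm:high-dimensional} or the earlier literature. First I would translate the invariants into lattice-point language once and for all. Since $f$ is weighted homogeneous with an isolated singularity, the Milnor--Orlik formula \eqref{eq:milnor-product} gives $\mu=\prod_i(a_i-1)$, and the lattice-point formula \eqref{eq:pg-lattice} gives $p_g=P_n(a)$. Corollary~\ref{cor:axis} gives $v=\lceil a_n\rceil$. With these identifications the claimed inequality is exactly $n!P_n(a)\le\prod_i(a_i-1)-p_n(\lceil a_n\rceil)$, which is the form treated in Section~\ref{sec:ygc-proof}.

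For $n\ge7$ the statement, together with its equality characterization, is precisely Theorem~\ref{thm:high-dimensional}. Nothing further needs to be checked beyond the identification of $v$ above.

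For $3\le n\le6$ I would invoke the known results listed in the introduction. If $p_g=0$, the inequality reads $\mu\ge p_n(v)$, and the equality case is handled by Theorem~\ref{thm:yau-zuo}. Indeed, $p_n(v)\le(v-1)^n$ because $(v)_n\ge0$. Equality $\mu=p_n(v)$ then forces both $\mu=(v-1)^n$ and $(v)_n=0$. The first already makes $f$ right equivalent to a homogeneous polynomial by Theorem~\ref{thm:yau-zuo}. Conversely, a homogeneous $g$ of degree $v$ has $p_g=\binom vn$, and this vanishes exactly when $v<n$, so equality is consistent. If $p_g>0$, I would cite \cite[Theorem~A]{XY93} for $n=3$, \cite{LY04} for $n=4$, \cite{CLYZ11} for $n=5$, and \cite[Theorem~1.10 and Section~4]{LRYZ18} for $n=6$. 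Alternatively, whenever $a_n$ is an integer, Corollary~\ref{cor:integer-minimum} settles every $n\ge3$ directly.

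The ``if'' direction of the equality statement is uniform in $n$, and I would repeat the argument from Corollary~\ref{cor:integer-minimum}. A homogeneous $g$ of degree $m$ has $\mu=(m-1)^n$, $p_g=\binom mn$ and $v=m$, so equality holds for $g$. Equality then transfers to $f$ by invariance of $\mu$, $p_g$ and $v$ under right equivalence.

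The only delicate point is bookkeeping in the low-dimensional citations. One must check that each cited result proves both the inequality and the ``only if'' part of the equality characterization in the normalization used here, with $p_n(v)$ taken under the convention $\binom vn=0$. For $n=3$ one also needs $p_3(v)=v-1$ to match the Xu--Yau formulation.
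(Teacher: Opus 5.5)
Your proposal is correct and matches the paper's proof: for $n\ge7$ you identify $\mu$, $p_g$, $v$ via \eqref{eq:milnor-product}, \eqref{eq:pg-lattice} and Corollary~\ref{cor:axis} and apply Theorem~\ref{thm:high-dimensional}, and for $3\le n\le6$ with $p_g>0$ you cite \cite{XY93}, \cite{LY04}, \cite{CLYZ11}, \cite{LRYZ18}, exactly as the paper does. The only cosmetic difference is the case $p_g=0$: you derive it directly from Theorem~\ref{thm:yau-zuo} using $p_n(v)=(v-1)^n-(v)_n\le(v-1)^n$, while the paper cites \cite[Theorem~C]{YZ15}, which amounts to the same argument.
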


\begin{proof}
For $n\ge7$, the inequality and equality characterization follow from
Theorem~\ref{thm:high-dimensional}, the Milnor--Orlik
formula~\eqref{eq:milnor-product}, the geometric-genus
formula~\eqref{eq:pg-lattice}, and Corollary~\ref{cor:axis}.

It remains to consider $3\le n\le6$. If \(p_g=0\), the conjecture is proved in \cite[Theorem~C]{YZ15}. If $p_g>0$, the case $n=3$ is proved in
\cite[Theorem~A]{XY93}; the case $n=4$ is proved in
\cite[Main Theorem]{LY04}; the case $n=5$ is proved in
\cite[Theorem~1.4]{CLYZ11}; and the case $n=6$ is proved in
\cite[Theorem~1.10]{LRYZ18}.
These results provide the required inequality and equality
characterization in the respective dimensions. All cases are now
covered.
\end{proof}

\end{document}